\theoremstyle{definition}
\newtheorem{Theorem}{Theorem}
\newtheorem{theorem}[equation]{Theorem}
\newtheorem{lemma}[equation]{Lemma}
\newtheorem{definition}[equation]{Definition}
\newtheorem{remark}[equation]{Remark}
\renewcommand{\phi}{\varphi}
\newcommand{\D}{\mathrm{d}}
\newcommand{\E}{\mathrm{e}}
\newcommand{\ti}{\tilde}
\renewcommand{\(}{\bigl(}
\renewcommand{\)}{\bigr)\vphantom{)}}
\newcommand{\Span}{\operatorname{Span}}
\newcommand{\One}{{1\hskip-2.5pt{\rm l}}}
\newcommand{\eps}{\varepsilon}
\newcommand{\Si}{\Sigma}
\newcommand{\de}{\delta}
\newcommand{\al}{\alpha}
\newcommand{\be}{\beta}
\newcommand{\la}{\lambda}
\newcommand{\La}{\Lambda}
\newcommand{\Ex}{\mathbb E\,}
\newcommand{\R}{\mathbb R}
\renewcommand{\Pr}[1]{\mathbb{P}\mskip1.5mu\(\mskip1.5mu#1\mskip1.5mu\)}
\newcommand{\cE}[2]{\mathbb{E}\mskip1.5mu\(\mskip1.5mu#1\mskip1.5mu
 \big|\mskip1.5mu#2\mskip1.5mu\)}
\newcommand{\close}[1]{$#1$\nobreakdash-\hspace{0pt}close}
\begin{document}

\title{From uniform renewal theorem\\
 to uniform large and moderate deviations\\
 for renewal-reward processes}

\author{Boris Tsirelson}

\date{}
\maketitle

\begin{abstract}
A uniform key renewal theorem is deduced from the uniform Blackwell's
renewal theorem. A uniform LDP (large deviations principle) for
renewal-reward processes is obtained, and MDP (moderate deviations
principle) is deduced under conditions much weaker than existence of
exponential moments.
\end{abstract}

\tableofcontents

\section*{Introduction}
An ordinary renewal-reward process $ S(\cdot) $ is a process of the
form
\[
S(t) = X_1 + \dots + X_n \quad \text{for } \tau_1+\dots+\tau_n \le t
 < \tau_1+\dots+\tau_{n+1} \, ;
\]
here $ (\tau_1,X_1), (\tau_2,X_2), \dots $ are independent copies of a
pair $ (\tau,X) $ of (generally, correlated) random variables such
that $ \tau > 0 $ a.s.

Large deviations principle (LDP) for $ S(t) $ (as $ t\to\infty $) is
well-known when $ \tau $ and $ X $ have exponential moments. Otherwise
the large deviations have peculiarity disclosed
recently \cite{LMZ11}. I prove moderate deviations principle (MDP) for
$ S(t) $ requiring
\begin{gather}
\Ex \tau < \infty \, , \label{*} \\
\Ex \exp(\eps X^2 - \tau) < \infty \quad \text{for some
 } \eps>0 \label{**}
\end{gather}
rather than $ \Ex \exp(\eps |X|) < \infty $, $ \Ex \exp(\eps\tau)
< \infty $. An example: $ X = \sqrt\tau $; MDP holds whenever
$ \Ex \tau < \infty $.

Conditions \eqref{*}, \eqref{**} imply $ \Ex X^2 < \infty $ and are
invariant under linear transformations of $ X $ and rescaling of
$ \tau $ (see Remarks \ref{3.1}, \ref{3.2}), thus, we may restrict
ourselves to the case
\begin{equation}\label{***}
\Ex X = 0 \, , \quad \Ex X^2 = 1 \, , \quad \Ex \tau = 1 \, .
\end{equation}

\begin{Theorem}\label{Th1}
If \eqref{**}, \eqref{***} are satisfied then
\[
\lim_{x\to\infty,x/\sqrt t\to0} \frac1{x^2} \ln \Pr{ S(t) > x\sqrt t }
= -\frac12 \, .
\]
\end{Theorem}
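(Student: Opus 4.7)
The plan is to deduce Theorem \ref{Th1} from the uniform LDP for $S(t)$ that is to be established in the body of the paper. Under \eqref{***} the law of large numbers gives $S(t)/t\to 0$ a.s., and the classical renewal-reward CLT identifies the asymptotic variance of $S(t)/\sqrt t$ as $\Var(X)/\Ex\tau = 1$. Correspondingly, the LDP rate function $I(\de)$ governing $\Pr{S(t)/t>\de}$ for small $\de>0$ will satisfy $I(0)=0$, $I'(0)=0$ and $I''(0)=1$, so that $I(\de)=\tfrac12\de^2+o(\de^2)$ as $\de\downarrow 0$. The MDP statement will then follow by specialising $\de=x/\sqrt t$.

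To obtain the quadratic expansion of $I$, I would use the implicit formula $\La(\la)=q(\la)$, where $q=q(\la)$ is defined on a neighbourhood of $\la=0$ by the transcendental equation $\Ex\exp\(\la X-q(\la)\tau\)=1$. Hypothesis \eqref{**} makes $(\la,q)\mapsto\Ex\exp(\la X-q\tau)$ smooth on a neighbourhood of $(0,0)$, so the implicit function theorem produces $q$; differentiating the defining identity twice and using \eqref{***} yields $q(0)=0$, $q'(0)=0$, $q''(0)=1$, and Legendre-transforming gives the claimed expansion of $I$.

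With these ingredients in hand, the conclusion comes from invoking the uniform LDP at the shrinking deviation level $\de = x/\sqrt t$:
\[
\ln\Pr{S(t)>x\sqrt t} \;=\; -tI(\de)\(1+o(1)\) \;=\; -\tfrac12 x^2\(1+o(1)\),
\]
after which dividing by $x^2$ and passing to the limit $t\to\infty$, $x\to\infty$, $x/\sqrt t\to 0$ delivers $-1/2$. The main obstacle is precisely the uniformity of the LDP as $\de\to 0$: a pointwise LDP at each fixed $\de>0$ would not suffice, because here $\de=x/\sqrt t$ itself shrinks with $t$ and one needs the error $o(I(\de))/I(\de)$ to be negligible uniformly over this vanishing scale. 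That uniformity is what the uniform key renewal theorem is designed to supply, by controlling the fluctuations of the renewal-equation remainder governing the exponential moments $\Ex\E^{\la S(t)}$; once the uniform LDP is granted, the deduction of the MDP is essentially the Taylor-expansion calculation sketched above.
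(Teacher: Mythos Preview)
Your overall strategy matches the paper's: obtain the quadratic expansion of the function $\eta_\la$ (your $q$) near $\la=0$, combine it with the uniform estimate \eqref{******} on $\ln\Ex\E^{\la S(t)}$, and conclude by a G\"artner--Ellis argument. The paper phrases the last step through the scaled cumulant generating function,
\[
\frac{1}{\la^2 t}\ln\Ex\E^{\la S(t)} \;=\; \frac{\eta_\la}{\la^2} + O\!\Big(\frac{1}{\la^2 t}\Big) \;\longrightarrow\; \tfrac12
\quad\text{as } \la\to 0,\ \la^2 t\to\infty,
\]
rather than via the Legendre dual $I$, but this is the same content.

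There is, however, a genuine gap in your derivation of $q''(0)=1$. You assert that \eqref{**} makes $(\la,q)\mapsto \Ex\exp(\la X-q\tau)$ smooth on a neighbourhood of $(0,0)$, so that the implicit function theorem applies. Under \eqref{**} this is \emph{false} in general: nothing forces $\Ex\E^{\la X}$ to be finite for any $\la\ne 0$, since $\tau$ need not have exponential moments (think of the paper's own example $X=\sqrt\tau$ with a heavy-tailed $\tau$). The function $\psi(\la,q)=\Ex\exp(\la X-q\tau)$ is finite only on the open half-plane $\{q>0\}$ together with the single point $(0,0)$, so there is no full neighbourhood of the origin on which to run the implicit function theorem or to differentiate twice.

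This is exactly why the paper does not differentiate. Instead it establishes continuity of $(\la,\eta)\mapsto\exp(\la X-\eta\tau)$ in $L_1$ only on the parabolic region $\{\eta\ge a\la^2\}$ (Lemma~\ref{3.45}), using the tailor-made majorants of Lemmas~\ref{3.3}--\ref{3.5}, and then obtains $\eta_\la=\tfrac12\la^2+o(\la^2)$ by a comparison argument (Lemma~\ref{3.7}): for $a>\tfrac12$ one shows $\Ex\exp(\la X-a\la^2\tau)<1$ for small $\la$, hence $\eta_\la<a\la^2$, and symmetrically for $a<\tfrac12$. Your sketch would go through verbatim under the stronger hypothesis $\Ex\E^{\eps|X|}+\Ex\E^{\eps\tau}<\infty$, but the whole point of Theorem~\ref{Th1} is to avoid that; you need to replace the implicit-function-theorem step by the parabolic-domain argument.
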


The limit in two variables $t,x$ is taken; that is, for every $ \eps>0
$ there exists $ \de>0 $ such that for all $ t,x $ satisfying $ x >
1/\de $, $ x/\sqrt t<\de $ the function is \close{\eps} to the limit.

Theorem \ref{Th1} (MDP) will be deduced from Theorem \ref{Th3}, and
Theorem \ref{Th3} extends Theorem \ref{Th2} (uniform LDP). The
assumption for Theorem \ref{Th2} is weaker than \eqref{**}:
\begin{equation}\label{****}
\forall \la \in \R \; \forall \eps > 0 \quad \Ex \exp ( \la X
- \eps \tau ) < \infty \, .
\end{equation}
(In combination with \eqref{*} it implies $ \Ex |X| < \infty $, see
Remark \ref{2.25}.)

\begin{Theorem}\label{Th2}
If \eqref{*}, \eqref{****} hold and $ \Ex X = 0 $ then for every $ \la
$, first, there exists one and only one $ \eta_\la \in [0,\infty) $
such that
\begin{equation}\label{*****}
\Ex \exp ( \la X - \eta_\la \tau ) = 1 \, ;
\end{equation}
and second,
\begin{equation}\label{******}
\frac1t \ln \Ex \exp \la S(t) = \eta_\la + O \Big( \frac1t \Big)
\end{equation}
as $ t \to \infty $, uniformly in $ \la \in [-C,-c] \cup [c,C] $
whenever $ 0 < c < C < \infty $.
\end{Theorem}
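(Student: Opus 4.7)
\emph{First assertion (existence of $\eta_\lambda$).} Set $M(\lambda,\eta) := \Ex \exp(\lambda X - \eta\tau)$. By \eqref{****} this is finite for every $\eta > 0$; dominated convergence makes $\eta \mapsto M(\lambda,\eta)$ continuous and, since $\tau > 0$ a.s., strictly decreasing on $[0,\infty)$, with boundary values $M(\lambda,0) = \Ex e^{\lambda X} \ge 1$ (by Jensen and $\Ex X = 0$) and $\lim_{\eta\to\infty}M(\lambda,\eta) = 0$. Thus \eqref{*****} has a unique solution $\eta_\lambda \in [0,\infty)$; moreover $\eta_\lambda > 0$ whenever $X$ is nondegenerate and $\lambda \ne 0$.

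\emph{Exponential change of measure.} Write $T_n := \tau_1 + \cdots + \tau_n$ and $Y_n := X_1 + \cdots + X_n$. Thanks to \eqref{*****}, the densities $\exp(\lambda Y_n - \eta_\lambda T_n)$ on the $\sigma$-field $\mathcal{F}_n$ generated by the first $n$ pairs are consistent as $n$ grows, and determine a probability $\tilde{\mathbb{P}}_\lambda$ under which $(\tau_i,X_i)$ remain i.i.d., with common law $e^{\lambda x - \eta_\lambda s}\,\mathbb{P}_{(\tau,X)}(ds,dx)$. Decomposing the expectation over $\{T_n \le t < T_{n+1}\}$, integrating out $\tau_{n+1}$ (independent of $\mathcal{F}_n$ under $\mathbb{P}$), and then changing measure on $\mathcal{F}_n$, one arrives at the key identity
\[
\Ex \exp(\lambda S(t)) \;=\; e^{\eta_\lambda t} \int_0^t h_\lambda(t-s)\, \tilde U_\lambda(ds), \qquad h_\lambda(u) := e^{-\eta_\lambda u}\, \mathbb{P}(\tau > u),
\]
where $\tilde U_\lambda(A) := \sum_{n \ge 0} \tilde{\mathbb{P}}_\lambda(T_n \in A)$ is the renewal measure of the tilted inter-arrival sequence.

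\emph{Uniform renewal argument.} Each $h_\lambda$ is nonincreasing with
\[
\int_0^\infty h_\lambda(u)\,du \;=\; \frac{1 - \Ex e^{-\eta_\lambda \tau}}{\eta_\lambda} \;>\; 0,
\]
hence directly Riemann integrable. The uniform key renewal theorem (the paper's principal preliminary result) then yields
\[
\int_0^t h_\lambda(t-s)\, \tilde U_\lambda(ds) \;\longrightarrow\; \frac{1 - \Ex e^{-\eta_\lambda \tau}}{\eta_\lambda\, \tilde{\mathbb{E}}_\lambda \tau}, \qquad t \to \infty,
\]
uniformly in $\lambda \in [-C,-c] \cup [c,C]$. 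The right-hand side is continuous in $\lambda$ and strictly positive on this compact set, hence bounded between two positive constants there; taking logarithms and dividing by $t$ gives \eqref{******}.

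\emph{Main obstacle.} The substance of the argument lies in the \emph{uniformity} of the renewal step: one must verify that the family of tilted laws $\{\tilde{\mathbb{P}}_\lambda\}$ meets the hypotheses of the uniform key renewal theorem with uniform bounds --- the tilted mean $\tilde{\mathbb{E}}_\lambda \tau$ bounded above and away from $0$, the tail $\tilde{\mathbb{P}}_\lambda(\tau > u)$ and the directly Riemann integrable majorant of $h_\lambda$ uniformly controlled, and the non-arithmetic / spread-out assumption preserved under tilting throughout the compact parameter range. Continuity of $\lambda \mapsto \eta_\lambda$ together with \eqref{****} delivers all these uniformities; once they are in place, the $O(1/t)$ conclusion falls out of the uniform renewal statement applied to $h_\lambda$, and the rest is bookkeeping.
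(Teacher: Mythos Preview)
Your outline is essentially the paper's own argument: existence and uniqueness of $\eta_\lambda$ via monotonicity in $\eta$ (Lemma~\ref{2.4}), the tilting identity \eqref{2.2}, and then the uniform key renewal theorems of Section~\ref{sect1}, with continuity of $\lambda\mapsto(\eta_\lambda,\mu_\lambda,h_\lambda)$ on the compact set $[-C,-c]\cup[c,C]$ supplying all the uniform bounds (Lemmas~\ref{2.3}--\ref{2.9}). The one point you should add is the lattice/nonlattice split: the paper applies Theorem~\ref{t} in the nonlattice case and Theorem~\ref{1.45} in the lattice case, whereas your sketch tacitly treats only the former. You do note that tilting preserves the arithmetic type (indeed $\mu_\lambda\sim\mu$ forces $\Span(\mu_\lambda)=\Span(\mu)$), but when $\Span(\mu)=\de>0$ you must invoke the discrete uniform key renewal theorem and then pass from $t\in\de\Z$ to general $t$ using constancy of $S(\cdot)$ on $[k\de,k\de+\de)$ together with boundedness of $\eta_\lambda$. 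As a minor aside, your $h_\lambda(u)=e^{-\eta_\lambda u}\,\Pr{\tau>u}$ with the \emph{original} tail is what the change-of-measure computation on $\mathcal F_n$ actually produces; the paper's \eqref{2.17} writes $\Pr{\tau_\lambda>u}$ instead, but either function is positive, decreasing, uniformly directly Riemann integrable with a continuous strictly positive integral in $\lambda$, so the $O(1/t)$ conclusion is unaffected.
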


\begin{Theorem}\label{Th3}
If \eqref{**} and \eqref{***} hold then
\[
\eta_\la = \frac12 \la^2 + o(\la^2) \quad \text{as } \la \to 0 \, ,
\]
and \eqref{******} holds uniformly in $ \la \in [-C,C] $ whenever $ 0 <
C < \infty $.
\end{Theorem}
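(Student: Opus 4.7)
The plan is to establish the two conclusions separately, building on the identity $\Ex \exp(\la X - \eta_\la \tau) = 1$ supplied by Theorem~\ref{Th2}. For the expansion $\eta_\la = \frac12 \la^2 + o(\la^2)$, formal implicit differentiation of $\Ex \exp(\la X - \eta \tau) = 1$ at $(\la,\eta) = (0,0)$ gives $\eta_\la'|_{\la=0} = \Ex X/\Ex \tau = 0$ and $\eta_\la''|_{\la=0} = \Ex X^2/\Ex \tau$, which equals $1$ by~\eqref{***}. Condition~\eqref{**} combined with~\eqref{*} forces $\Ex X^2 < \infty$ by a layer-cake estimate. The subtlety is that $\Ex \tau^2$ and $\Ex |X|^3$ need not be finite, so the Taylor remainder $\Ex[\exp(\la X - \eta_\la \tau) - 1 - (\la X - \eta_\la \tau) - \tfrac12 (\la X - \eta_\la \tau)^2]$ cannot be bounded naively. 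I would split on $\{\tau \le K\}$ versus $\{\tau > K\}$: on the first event \eqref{**} makes the conditional law of $X$ sub-Gaussian with constants depending only on $K$, so all relevant conditional moments are finite and the standard Taylor estimate applies; on the second event the factor $e^{-\eta_\la \tau}$ combined with \eqref{*} and H\"older controls the contribution. Letting $K = K(\la) \to \infty$ slowly as $\la \to 0$ yields the required $o(\la^2)$ error.

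For the uniform MGF estimate on $\la \in [-C, C]$, I would follow the tilted-measure scheme behind Theorem~\ref{Th2}. The Esscher tilt with density $\exp(\la X - \eta_\la \tau)$ produces a probability measure $\mu_\la$ on the joint law of $(X, \tau)$, by~\eqref{*****}. Conditioning on the first renewal pair gives the renewal equation $v_\la = h_\la \ast \D U_\la$, where $v_\la(t) = e^{-\eta_\la t} \Ex \exp(\la S(t))$, $h_\la(u) = e^{-\eta_\la u} \Pr{\tau > u}$, and $U_\la$ is the renewal function of $\tau$ under $\mu_\la$. The paper's uniform key renewal theorem then yields $v_\la(t) \to (\mathbb E_{\mu_\la} \tau)^{-1} \int_0^\infty h_\la$ with an $O(1/t)$ error, and taking logarithms gives~\eqref{******}.

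The main obstacle will be pushing this $O(1/t)$ uniformly in $\la$ all the way down to $\la = 0$ --- this is the new content beyond Theorem~\ref{Th2}. It requires the uniform key renewal theorem's hypotheses --- bounds on $\mathbb E_{\mu_\la} \tau$, uniform direct Riemann integrability of the family $\{h_\la\}$, and a uniform ``spread'' condition on the tilted $\tau$-marginal --- to hold uniformly in $\la \in [-C, C]$. Condition~\eqref{**} supplies the needed integrability through the same sub-Gaussian/truncation estimates that drove the expansion of $\eta_\la$. The case $\la = 0$ itself is trivial ($\eta_0 = 0$ and $v_0 \equiv 1$), and continuity of the tilted data as $\la \to 0$, under~\eqref{**}, makes the approach to this limit uniform.
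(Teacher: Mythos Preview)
For the second claim your outline matches the paper's argument: the Esscher tilt gives $\Ex e^{\la S(t)}=e^{\eta_\la t}(U_{\mu_\la}\!*h_\la)(t)$, and the uniform key renewal theorem (Theorem~\ref{t} or~\ref{1.45}) applied to the families $\{\mu_\la\}_{\la\in[-C,C]}$ and $\{h_\la\}_{\la\in[-C,C]}$ yields \eqref{******}. Two minor corrections: $h_\la(u)$ must use the tilted waiting time $\tau_\la$, not $\tau$; and the key renewal theorem gives only uniform convergence of $(U_{\mu_\la}\!*h_\la)(t)$ to a positive constant, not an $O(1/t)$ rate --- the $O(1/t)$ in \eqref{******} comes from dividing the uniformly bounded $\ln(U_{\mu_\la}\!*h_\la)(t)$ by $t$. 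The new work beyond Theorem~\ref{Th2} is checking the renewal-theorem hypotheses at $\la=0$; the paper does this by proving $L_1$-continuity of $(\la,\eta)\mapsto e^{\la X-\eta\tau}$ and $(\la,\eta)\mapsto\tau e^{\la X-\eta\tau}$ on the parabolic region $\{\eta\ge a\la^2\}$ (Lemma~\ref{3.45}), dominated by the majorant $\tau+\exp(\eps X^2-\tau)$ made integrable precisely by \eqref{*} and \eqref{**}.

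For the first claim the paper takes a shorter route that avoids your truncation entirely. Rather than expanding to second order at $(\la,\eta_\la)$, it fixes $a>0$ and proves
\[
\frac{\Ex\exp(\la X-a\la^2\tau)-1}{\la^2}\;\longrightarrow\;\tfrac12-a\qquad(\la\to0)
\]
by dominated convergence: pointwise $\la^{-2}\bigl(e^{\la X-a\la^2\tau}-1-(\la X-a\la^2\tau)\bigr)\to\tfrac12 X^2$, and the elementary estimate (Lemma~\ref{3.5})
\[
\bigl|e^{\la x-a\la^2 t}-1-(\la x-a\la^2 t)\bigr|\;\le\;C\,\la^2\bigl(t+e^{\eps x^2-t}\bigr)
\]
supplies a $\la$-uniform majorant integrable by \eqref{*}, \eqref{**}. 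Then $a>\tfrac12$ forces $\eta_\la<a\la^2$ and $a<\tfrac12$ forces $\eta_\la>a\la^2$, giving the result. Only the \emph{first}-order remainder is controlled; $\Ex\tau^2$ never enters.

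Your second-order truncation scheme, by contrast, runs into a real obstacle. After the natural cancellations one must show $\Ex\bigl[e^{u}-1-u;\,\tau>K\bigr]=o(\la^2)$ with $u=\la X-\eta_\la\tau$; the bound $e^u-1-u\le\tfrac12 u^2\max(1,e^u)$ reintroduces $\eta_\la^2\,\Ex[\tau^2;\tau>K]$, which can be infinite (take $\Pr{\tau>t}\sim t^{-2}$). Meanwhile the third-order remainder on $\{\tau\le K\}$ is of order $\la^3\,\Ex[|X|^3;\tau\le K]=O(\la^3 e^K)$ under \eqref{**}, forcing $K=o(\log(1/\la))$ --- far too slow for the damping $e^{-\eta_\la\tau}\approx e^{-\la^2\tau/2}$ to help on $\{\tau>K\}$. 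The paper's device of replacing $\eta_\la$ by the test value $a\la^2$ and working with the first-order remainder against the tailor-made majorant is exactly what makes the argument close.
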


\numberwithin{equation}{section}

\section[Uniform renewal theorems]
  {\raggedright Uniform renewal theorems}
\label{sect1}
A uniform version of Blackwell's renewal theorem is
available \cite[Th. 1]{WW96}, \cite[Th. 2.6(2), 2.7]{BF??} and may be
formulated as follows.

First, we define the \emph{span} of a probability measure $ \mu $ on $
(0,\infty) $ as
\[
\Span(\mu) = \max \( \big\{ \de>0 : \mu(\{\de,2\de,3\de,\dots\}) =
1 \big\} \cup \big\{0\big\} \) \, ; 
\]
$ \Span(\mu) = 0 $ if and only if $ \mu $ is nonlattice. A set $ M $
of probability measures on $ (0,\infty) $ will be called a \emph{set
of constant span} $ \de $, if $ \Span(\mu) = \de $ for all $ \mu \in M
$. Symbolically: $ \Span(M) = \de $. Thus, a set of constant span $ 0
$ contains only nonlattice measures; a set of constant span $ \de>0 $
contains only lattice measures of span $ \de $ (rather than $ 2\de,
3\de, \dots $).

Second, for every probability measure $ \mu $ on $ (0,\infty) $ we
introduce the \emph{renewal measure} as the sum of convolutions:
\begin{equation}\label{1.05}
U_\mu = \sum_{n=0}^\infty \underbrace{ \mu * \dots * \mu }_n
\end{equation}
(the term for $ n=0 $ being the atom at the origin); $ U_\mu $ is not
finite but locally finite, since $ \int \E^{-t} U_\mu(\D t)
= \sum_n \( \int \E^{-t} \mu(\D t) \)^n < \infty $. It is well-known
(see \cite[p. 123]{BB05}) that $ U_\mu ((u,u+v]) \le U_\mu ([0,v]) $
and moreover,
\begin{equation}\label{!}
U_\mu ((u,u+v]) \le U_\mu ([0,v)) \quad \text{for all } u,v \ge 0 \, .
\end{equation}

\begin{theorem}\label{1.2} (\cite{WW96}, \cite{BF??})
Assume that a set $ M $ of probability measures on $(0,\infty)$ is
weakly compact (treated as a set of measures on $\R$), is a set of
constant span, and is uniformly integrable, that is,
\[
\lim_{a\to+\infty} \sup_{\mu\in M} \int_{[a,\infty)} t \, \mu(\D t) =
0 \, .
\]
Then in the nonlattice case ($ \Span(M)=0 $), for every $ v>0 $,
\[
U_\mu \( [u,u+v) \) \to \frac{ v }{ \int t \, \mu(\D t)
} \quad \text{as } u \to \infty
\]
uniformly in $ \mu \in M $; and in the lattice case ($ \Span(M)=\de
$)
\[
U_\mu (\{n\de\}) \to \frac{ \de }{ \int t \, \mu(\D t)
} \quad \text{as } n \to \infty
\]
uniformly in $ \mu \in M $.
\end{theorem}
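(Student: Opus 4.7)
The plan is to derive the uniform statement from the classical pointwise Blackwell theorem by a compactness $+$ contradiction argument, with the bulk of the work being a uniform coupling estimate.

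First I would record two preparations. By uniform integrability, the map $\mu \mapsto m_\mu := \int t\,\mu(\D t)$ is continuous on the weakly compact $M$, and since each $m_\mu>0$ one has $0 < c \le m_\mu \le C < \infty$ uniformly. Next, from \eqref{!} together with the Laplace identity $\int \E^{-st}\,U_\mu(\D t) = 1/(1-\hat\mu(s))$, the weak continuity of $\mu \mapsto \hat\mu(s)$ for fixed $s>0$, and $\hat\mu(s)<1$, one extracts $\sup_{u\ge 0,\,\mu \in M} U_\mu([u,u+v)) \le C_v$ uniformly for every $v>0$.

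Suppose now the conclusion fails in the nonlattice case: there exist $\eps>0$, $\mu_n \in M$ and $u_n \to \infty$ with $|U_{\mu_n}([u_n,u_n+v)) - v/m_{\mu_n}| > \eps$. Extracting a subsequence, $\mu_n \to \mu_\infty \in M$ weakly. For each $\mu_n$, couple two delayed renewal processes driven by $\mu_n$: one undelayed and one started from the stationary residual-life law $\mu_n^*$ of density $(1-F_{\mu_n})/m_{\mu_n}$. The stationary version has renewal measure equal to $m_{\mu_n}^{-1}\cdot$Lebesgue, so the standard coupling inequality bounds $|U_{\mu_n}([u_n,u_n+v)) - v/m_{\mu_n}|$ by $C_v \cdot \Pr{T_n > u_n}$, where $T_n$ is the coupling time of the pair driven by $\mu_n$. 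The contradiction follows once $\Pr{T_n > u_n} \to 0$.

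The main obstacle is precisely this uniform tail control on $T_n$. For a single nonlattice $\mu$ the Lindvall / Athreya--Ney construction yields an a.s.\ finite coupling time, but its tails depend on quantitative nonlattice-ness of $\mu$. The crucial observation is that weak convergence $\mu_n \to \mu_\infty$ together with $\Span(M) = 0$ prevents this modulus from degenerating: for some fixed $k$ (depending on $\mu_\infty$), the convolutions $\mu_n^{*k}$ admit a common absolutely continuous component of mass bounded below for all large $n$, which feeds a uniform Lindvall coupling with geometric tails on $T_n$. The lattice case $\Span(M)=\de$ is analogous, with the coupling reduced to a random walk on $\de\Z$ whose step law has uniformly bounded mean by the uniform integrability hypothesis.
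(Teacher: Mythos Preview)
The paper does not prove Theorem~\ref{1.2}: it is quoted from \cite{WW96} and \cite{BF??} and used as a black box. So there is no ``paper's own proof'' to compare against, and your proposal is an attempt to reprove a cited external result.

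That said, your plan has a genuine gap at the step you yourself flag as the main obstacle. You assert that, for $\mu_n \to \mu_\infty$ weakly with all $\mu_n$ nonlattice, ``the convolutions $\mu_n^{*k}$ admit a common absolutely continuous component of mass bounded below for all large $n$.'' This is false: if every $\mu_n$ is purely atomic (say, each supported on $\{1,\sqrt2\}$), then every convolution power $\mu_n^{*k}$ is also purely atomic and has no absolutely continuous part at all. Nonlattice does not imply, nor lead after convolution to, any absolutely continuous component; that is the ``spread out'' condition, which is strictly stronger. Consequently the Lindvall coupling you invoke cannot be fed in the way you describe, and your uniform tail bound on the coupling time $T_n$ is unsupported.

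The compactness-plus-contradiction skeleton is the right shape (and is indeed how \cite{WW96} proceeds), but the missing ingredient is a genuine uniform-in-$\mu$ quantitative input. One route is Fourier-analytic: control $|1-\hat\mu_n(\theta)|$ away from zero uniformly on compacts of $\theta\ne 0$, using that $\mu_\infty$ is nonlattice and $\hat\mu_n\to\hat\mu_\infty$ locally uniformly; this feeds a smoothing/Stone-type estimate. Another is the Mineka--Lindvall coupling based on the symmetrized step $\tau-\tau'$, where what one needs is uniform recurrence of the associated mean-zero random walk, not absolute continuity. Either way, the specific mechanism you wrote down does not work.
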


The uniform integrability of $ M $ ensures continuity of the function
$ \mu \mapsto \int t \, \mu(\D t) $ on $ M $. We denote for
convenience
\[
\la_\mu = \frac1{ \int t \, \mu(\D t) } \, ;
\]
by compactness,
\begin{equation}\label{1.4}
0 < \min_{\mu\in M} \la_\mu \le \max_{\mu\in M} \la_\mu < \infty \, .
\end{equation}
A uniform version of key renewal theorem follows. We start with the
lattice case.

\begin{theorem}\label{1.45}
Let $ M $ be a set of probability measures on $(0,\infty)$ satisfying
the conditions of Theorem \ref{1.2}, $ \Span(M) = \de > 0 $, and $ H $
a set of functions $ \{0,\de,2\de,\dots\} \to \R $ such that
\[
\sup_{h\in H} \sum_{k=0}^\infty |h(k\de)| < \infty \quad \text{and}
\quad \lim_{n\to\infty} \sup_{h\in H} \sum_{k=n}^\infty |h(k\de)| =
0 \, .
\]
Then
\[
(U_\mu*h) (n\de) \to \de \la_\mu \sum_{k=0}^\infty h(k\de) \quad
\text{as } n \to \infty
\]
uniformly in $ \mu \in M $ and $ h \in H $.
\end{theorem}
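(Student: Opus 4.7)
The plan is to reduce the assertion to Theorem~\ref{1.2} via a standard truncation argument, with the joint uniformity in $\mu$ and $h$ controlled by two ingredients: the uniform estimate on the mass of atoms of $U_\mu$ provided by \eqref{!}, and the uniform tail condition built into the definition of $H$.

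First I would write the convolution pointwise as
\[
(U_\mu * h)(n\de) = \sum_{j=0}^n U_\mu\(\{(n-j)\de\}\) h(j\de) \, ,
\]
and extract a uniform bound on atoms. Since $\mu$ lives on $(0,\infty)$, the atom of $U_\mu$ at $0$ equals $1$, so $U_\mu([0,\de)) = 1$ in the lattice case; applying \eqref{!} with $u=(m-1)\de$ and $v=\de$ then gives $U_\mu(\{m\de\}) \le 1$ for every $m \ge 0$ and every $\mu \in M$.

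Second, given $\eps>0$ I would choose $N$ so large that $\sup_{h \in H} \sum_{k=N+1}^\infty |h(k\de)| < \eps$, using the tail hypothesis on $H$. The tail of the convolution, $\sum_{j=N+1}^n U_\mu(\{(n-j)\de\})\, h(j\de)$, is then bounded in absolute value by $\eps$ uniformly in $\mu, h, n$, thanks to the atom bound; the tail of the target $\de \la_\mu \sum_{k=N+1}^\infty h(k\de)$ is bounded by $\de \max_{\mu \in M} \la_\mu \cdot \eps$, which is finite by \eqref{1.4}. For the finite head $\sum_{j=0}^N U_\mu(\{(n-j)\de\})\, h(j\de)$, Theorem~\ref{1.2} gives $U_\mu(\{m\de\}) \to \de\la_\mu$ uniformly in $\mu \in M$, so for $n$ sufficiently large (depending on $N$ and $\eps$) each of the $N+1$ factors is within $\eps/C$ of $\de\la_\mu$, where $C = \sup_{h \in H} \sum_k |h(k\de)| < \infty$; this makes the head differ from $\de\la_\mu \sum_{j=0}^N h(j\de)$ by at most $\eps$, uniformly in $\mu$ and $h$.

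Combining the three estimates yields the claim. I do not anticipate a genuine obstacle: the only delicate point is that the uniformity in $h$ must not be lost when we bound the tail of the convolution, and this is exactly what the \emph{uniform} summability in the definition of $H$, together with the uniform atom bound from \eqref{!}, is designed to handle.
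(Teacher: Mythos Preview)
Your proposal is correct and follows essentially the same approach as the paper: bound the atoms $U_\mu(\{m\de\})\le 1$ via \eqref{!}, then combine the uniform Blackwell convergence of Theorem~\ref{1.2} with a head/tail truncation using the uniform $l_1$-tail condition on $H$. The paper merely packages this truncation step as a separate abstract result (Lemma~\ref{1.5}), which it reuses later, but the underlying argument is the same as yours.
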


\begin{proof}
By \eqref{!}, $ U_\mu(\{n\de\}) \le U_\mu(\{0\}) = 1 $ for all $ \mu $
and $ n $. By Theorem \ref{1.2}, $ U_\mu(\{n\de\}) \to \de \la_\mu $
as $ n \to \infty $, uniformly in $ \mu \in M $. Lemma \ref{1.5}
(below) completes the proof.
\end{proof}

\begin{lemma}\label{1.5}
Let $ U $ and $ H $ be sets of functions $ \{0,1,2,\dots\} \to \R $
such that
\begin{gather*}
\sup_{u\in U} \sup_n |u(n)| < \infty \, , \\
\text{the limit } u(\infty) = \lim_{n\to\infty} u(n) \text{ exists
 uniformly in } u \in U \, ; \\
\sup_{h\in H} \sum_n |h(n)| < \infty \, ; \\
\sum_{n=N}^\infty |h(n)| \to 0 \text{ as } N \to \infty \, , \text{
 uniformly in } h \in H \, .
\end{gather*}
Then
\[
(u*h)(n) \to u(\infty) \sum_{k=0}^\infty h(k) \text{ as }
n \to \infty \, , \text{ uniformly in } u \in U \text{ and } h \in
H \, .
\]
\end{lemma}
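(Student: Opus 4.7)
The plan is a standard $\eps/3$ argument that splits the convolution so that each of the four uniform hypotheses controls one piece. Starting from the identity
\[
(u*h)(n) - u(\infty) \sum_{k=0}^\infty h(k)
= \sum_{k=0}^n \bigl( u(n-k) - u(\infty) \bigr) h(k)
 - u(\infty) \sum_{k=n+1}^\infty h(k) \, ,
\]
I would split the first sum at a threshold $N$ to be fixed below:
\[
\sum_{k=0}^n \bigl( u(n-k) - u(\infty) \bigr) h(k)
= \sum_{k=0}^N \bigl( u(n-k) - u(\infty) \bigr) h(k)
 + \sum_{k=N+1}^n \bigl( u(n-k) - u(\infty) \bigr) h(k) \, .
\]

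Let $C = \sup_{u \in U,\, n} |u(n)|$; then $|u(\infty)| \le C$ as well. The two tail-like contributions---the $\sum_{k>n}$ term in the first display and the $\sum_{k=N+1}^n$ piece of the splitting---are dominated by $C \sum_{k>n} |h(k)|$ and $2C \sum_{k>N} |h(k)|$, respectively, and hence are made arbitrarily small, uniformly in $u \in U$ and $h \in H$, by taking first $N$ and then $n$ large, thanks to the uniform tail-decay hypothesis on $H$. For the remaining head piece, each $k \le N$ satisfies $n - k \ge n - N$, so
\[
\Bigl| \sum_{k=0}^N \bigl( u(n-k) - u(\infty) \bigr) h(k) \Bigr|
\le \Bigl( \sup_{u \in U,\, m \ge n-N} |u(m) - u(\infty)| \Bigr)
 \cdot \sup_{h\in H} \sum_{k=0}^\infty |h(k)| \, ;
\]
for each fixed $N$, the first factor tends to $0$ as $n \to \infty$ by the uniform-convergence hypothesis, and the second factor is finite by the uniform $\ell^1$-bound.

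Given $\eps > 0$, the recipe is: first fix $N$ large enough that the two tail contributions are each $< \eps/3$ (uniformly in $u,h$); then choose $n_0$ so that for $n \ge n_0$ the head contribution is also $< \eps/3$ (uniformly). I do not expect a real obstacle here: the four uniform hypotheses were chosen precisely so that the standard pointwise argument for $\ell^1 * c_0$ convolution transports verbatim to a uniform argument, the only bookkeeping being to verify that the bounds on the three pieces depend on $\eps$ alone and not on the particular $u$ or $h$.
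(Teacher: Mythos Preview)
Your proof is correct and follows essentially the same approach as the paper: split the difference into a ``head'' piece $\sum_{k\le N}$ controlled by the uniform convergence of $u$ together with the uniform $\ell^1$-bound on $H$, and ``tail'' pieces controlled by $2\|u\|_\infty$ times the uniform tail $\sum_{k>N}|h(k)|$. The paper organizes the same estimate by writing $h=h_N+h^N$ and bounding $|(u*h^N)(n)|+|u(\infty)\Sigma(h^N)|\le 2\|u\|_\infty\|h^N\|_1$, arriving at a final bound of $3\eps$ just as your $\eps/3$ argument does.
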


\begin{proof}
Denoting $ \|u\|_\infty = \sup_n |u(n)| $, $ \|h\|_1 = \sum_n |h(n)|
$ and $ \Si(h) = \sum_n h(n) $ we have $ \|
u*h \|_\infty \le \|u\|_\infty \|h\|_1 $, $
|u(\infty)| \le \|u\|_\infty $ and $ |\Si(h)| \le \|h\|_1 $. For
arbitrary $ N \in \{0,1,2,\dots\} $ and $ h \in H $ we introduce $
h_N, h^N : \{0,1,2,\dots\} \to \R $ by $ h_N(n) = h(n) $ for $ n \le N
$, $ h_N(n) = 0 $ for $ n > N $, and $ h^N = h - h_N $. We have
$ \sup_{u\in U} \|u\|_\infty < \infty $, $ \sup_{h\in H} \|h\|_1
< \infty $, and $ \sup_{h\in H} \| h^N \|_1 \to 0 $ as $ N \to \infty
$. For arbitrary $ N $ and all $ n \ge N $,
\begin{multline*}
| (u*h)(n) - u(\infty) \Si(h) | \le \\
\le | (u*h_N)(n) - u(\infty) \Si(h_N) | + | (u*h^N)(n) -
 u(\infty) \Si(h^N) | \le \\
\le \Big| \sum_{k=0}^N u(n-k) h(k) - u(\infty) \sum_{k=0}^N h(k) \Big|
 + | (u*h^N)(n) | + | u(\infty) \Si(h^N) | \le \\
\le \sum_{k=0}^N | u(n-k) - u(\infty) | | h(k)| + \| u*h^N \|_\infty +
 |u(\infty)| | \Si(h^N) | \le \\
\le \|h\|_1 \sup_{k\ge n-N} | u(k) - u(\infty) | +
 2 \|u\|_\infty \|h^N\|_1 \, ;
\end{multline*}
given $ \eps > 0 $, we choose $ N $ such that
$ \|u\|_\infty \|h^N\|_1 \le \eps $ for all $ u \in U $ and $ h \in H
$; then for all $ n $ large enough we have $ \|h\|_1 \sup_{k\ge n-N} |
u(k) - u(\infty) | \le \eps $ for all $ u \in U $ and $ h \in H $, and
finally, $ | (u*h)(n) - u(\infty) \Si(h) | \le 3\eps $.
\end{proof}

The nonlattice case needs more effort. Recall that a function $ h :
[0,\infty) \to \R $ is called \emph{directly Riemann integrable,} if
two limits exist and ere equal (and finite):
\[
\lim_{\de\to0+} \de \sum_{n=0}^\infty \inf_{[n\de,n\de+\de)}
h(\cdot)
= \lim_{\de\to0+} \de \sum_{n=0}^\infty \sup_{[n\de,n\de+\de)}
h(\cdot) \, .
\]

\begin{definition}\label{d}
A set $ H $ of functions $ [0,\infty) \to \R $ is \emph{uniformly
directly Riemann integrable,} if
\begin{gather*}
\sup_{h\in H} \sum_{n=0}^\infty \sup_{[n,n+1)} |h(\cdot)| < \infty \,
 , \\
\sum_{n=N}^\infty \sup_{[n,n+1)} |h(\cdot)| \to 0 \quad \text{as
 } N\to\infty \, , \text{ uniformly in } h \in H \, ; \\
\de \sum_{n=0}^\infty \Big( \sup_{[n\de,n\de+\de)} h(\cdot)
 - \inf_{[n\de,n\de+\de)} h(\cdot) \Big) \to 0 \quad \text{as
 } \de\to0+ \, , \text{ uniformly in } h \in H \, .
\end{gather*}
\end{definition}

\begin{remark}\label{1.7}
If $ \sup_{h\in H} \sum_{n=0}^\infty \sup_{[n\de,n\de+\de)} |h(\cdot)|
< \infty $ for some $ \de $ then it holds for all $ \de $. Proof:
given $ \de_1, \de_2 > 0 $, we consider $ A = \{ (n_1,n_2) :
[n_1\de_1,n_1\de_1+\de_1) \cap
[n_2\de_2,n_2\de_2+\de_2) \ne \emptyset \} $, note that $ \#\{ n_1 :
(n_1,n_2) \in A \} \le \frac{\de_2}{\de_1} + 2 $, and get
\begin{multline*}
\sum_{n_1=0}^\infty \sup_{[n_1\de_1,n_1\de_1+\de_1)} |h(\cdot)|
 \le \sum_{n_1=0}^\infty \max_{n_2:(n_1,n_2)\in
 A} \sup_{[n_2\de_2,n_2\de_2+\de_2)} |h(\cdot)| \le \\
\le \sum_{(n_1,n_2)\in A} \sup_{[n_2\de_2,n_2\de_2+\de_2)}
 |h(\cdot)| \le \Big( \frac{\de_2}{\de_1} + 2 \Big)
 \sum_{n_2=0}^\infty \sup_{[n_2\de_2,n_2\de_2+\de_2)} |h(\cdot)| \, .
\end{multline*}
Thus, the first two conditions of Def.~\ref{d} may be reformulated as
\begin{gather*}
\sup_{h\in H} \sum_{n=0}^\infty \sup_{[n\de,n\de+\de)} |h(\cdot)|
 < \infty \, , \\
\sum_{n=N}^\infty \sup_{[n\de,n\de+\de)} |h(\cdot)| \to
 0 \quad \text{as } N\to\infty \, , \text{ uniformly in } h \in H
\end{gather*}
for some (therefore, all) $ \de > 0 $.
\end{remark}

\begin{remark}\label{1.8}
Similarly,
\[
\de \sum_{n:n\de>N} \Big( \sup_{[n\de,n\de+\de)} h(\cdot)
 - \inf_{[n\de,n\de+\de)} h(\cdot) \Big) \le
   (1+2\de) \sum_{n=N}^\infty \sup_{[n,n+1)} |h(\cdot)| \, .
\]
Thus, the third condition of Def.~\ref{d} may be reformulated as
uniform Riemann integrability on bounded intervals: for every $ N $,
\[
\de \sum_{n\ge0:n\de\le N} \Big( \sup_{[n\de,n\de+\de)} h(\cdot)
 - \inf_{[n\de,n\de+\de)} h(\cdot) \Big) \to 0 \quad \text{as
 } \de\to0+ \, , \text{ uniformly in } h \in H \, .
\]
\end{remark}

\begin{remark}\label{1.85}
If each $ h \in H $ is a decreasing function $ [0,\infty) \to
[0,\infty) $ then $ H $ is uniformly directly Riemann integrable if
and only if
\begin{gather*}
\sup_{h\in H} h(0) < \infty \, , \quad \sup_{h\in H} \int_0^\infty
 h(s) \, \D s < \infty \, , \quad \text{and} \\
\sup_{h\in H} \int_a^\infty h(s) \, \D s \to 0 \quad \text{as }
 a \to \infty \, .
\end{gather*}
By taking differences, a similar result can be obtained for functions
of uniformly bounded variation on $ [0,\infty) $ (rather than
decreasing).
\end{remark}

\begin{theorem}\label{t}
Let $ M $ be a set of probability measures on $(0,\infty)$ satisfying
the conditions of Theorem \ref{1.2}, $ \Span(M) = 0 $, and $ H $ a
uniformly directly Riemann integrable set of functions $
[0,\infty) \to \R $. Then
\[
(U_\mu*h) (t) \to \la_\mu \int_0^\infty h(s) \, \D s \quad
\text{as } t \to \infty
\]
uniformly in $ \mu \in M $ and $ h \in H $.
\end{theorem}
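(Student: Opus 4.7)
My plan is the classical Riemann-sum squeeze, with Theorem \ref{1.2} providing uniformity in $\mu$ and Definition \ref{d} providing uniformity in $h$. For $\de>0$ and $h\in H$, let $\bar h_\de$ and $\underline h_\de$ be the step functions constant on each $[n\de,(n+1)\de)$ with values $\sup_{[n\de,(n+1)\de)}h$ and $\inf_{[n\de,(n+1)\de)}h$ respectively; then $\underline h_\de \le h \le \bar h_\de$ pointwise, and the third clause of Def.~\ref{d} gives $\int_0^\infty(\bar h_\de-\underline h_\de)\,\D s \to 0$ as $\de\to0+$ uniformly in $h$. Since $U_\mu$ is a positive measure the sandwich passes to convolutions, so the theorem reduces to proving, for each fixed $\de>0$, that $(U_\mu*\bar h_\de)(t)\to\la_\mu\int_0^\infty \bar h_\de\,\D s$ as $t\to\infty$ uniformly in $\mu\in M$ and $h\in H$ (and the same for $\underline h_\de$), and then letting $\de\to0+$.

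Setting $c_k=\bar h_\de(k\de)$, the step-function convolution equals
\[
(U_\mu*\bar h_\de)(t) = \sum_{k\ge 0} c_k\,U_\mu\((t-(k+1)\de,\,t-k\de]\),
\]
while the target is $\de\la_\mu\sum_k c_k$. I would split the sum at an index $K$. For $k\le K$, Theorem \ref{1.2} gives $U_\mu((t-(k+1)\de,\,t-k\de])\to\de\la_\mu$ as $t\to\infty$, uniformly in $\mu\in M$ and in $k\le K$; the only mild subtlety is that Theorem \ref{1.2} is stated for half-open intervals $[u,u+v)$, but this is resolved by applying Blackwell at $v$ and at $v\pm\eps$ and sending $\eps\to 0$. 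Weighting by $|c_k|$ and using $\sup_h\sum_k|c_k|<\infty$ (first clause of Def.~\ref{d}), the low-$k$ contribution is $o(1)$ as $t\to\infty$. For $k>K$, \eqref{!} gives $U_\mu((t-(k+1)\de,\,t-k\de])\le U_\mu([0,\de))$, so the tail is dominated by $\bigl(\sup_\mu U_\mu([0,\de))+\de\max_\mu\la_\mu\bigr)\sum_{k>K}|c_k|$, which tends to $0$ as $K\to\infty$ uniformly in $h$ by the second clause of Def.~\ref{d}.

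The one ingredient not furnished by Theorem \ref{1.2} is the uniform bound $\sup_{\mu\in M} U_\mu([0,\de))<\infty$. I would read it off the Laplace identity $\int_0^\infty \E^{-t}U_\mu(\D t)=\bigl(1-\int_0^\infty \E^{-t}\mu(\D t)\bigr)^{-1}$: the map $\mu\mapsto\int \E^{-t}\mu(\D t)$ is weakly continuous and strictly below $1$ on $M$, so weak compactness yields $\sup_{\mu\in M}\int \E^{-t}\mu(\D t)<1$, whence $U_\mu([0,\de))\le \E^\de\int_0^\infty \E^{-t}U_\mu(\D t)$ is uniformly bounded. The main organisational hazard I expect is coordinating the two parameters $\de$ and $K$ against the double uniformity: one first fixes $\de$ small enough so $\int(\bar h_\de-\underline h_\de)<\eps$ uniformly in $h$, then $K$ large enough to kill the tail, then $t$ large enough for Blackwell to handle the finitely many low-$k$ terms.
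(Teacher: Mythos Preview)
Your proposal is correct and follows essentially the same route as the paper: the paper introduces the step approximants $h_\de^\pm$, proves the step-function case separately as Lemma~\ref{1.11} (via the abstract splitting Lemma~\ref{1.10}, which is exactly your split at index $K$), and uses the very same Laplace-transform bound $U_\mu([0,\de))\le \E^\de/(1-\max_\mu\int\E^{-s}\mu(\D s))$ for the uniform tail control. The only cosmetic difference is packaging: the paper factors your inline argument into two lemmas, and it silently applies Theorem~\ref{1.2} to intervals $(a,a+\de]$ without commenting on the half-open endpoint issue you flagged.
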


Here is a generalization of Lemma \ref{1.5}, to be used in the proof
of the theorem.

\begin{lemma}\label{1.10}
Let $ H $ be as in Lemma \ref{1.5}, and $ V $ a set of functions
$ \{0,1,2,\dots\} \times [0,\infty) \to \R $ such that, first,
$ \sup_{v\in V} \sup_n \sup_t |v_n(t)| < \infty $, and second, the
limit $ v(\infty) = \lim_{t\to\infty} v_n(t) $ exists uniformly in $
v \in V $ for every $ n $, and does not depend on $ n $. Then
\[
\sum_{n=0}^\infty h(n) v_n(t) \to v(\infty) \sum_{n=0}^\infty
h(n) \text{ as } t \to \infty \, , \text{ uniformly in } v \in
V \text{ and } h \in H \, .
\]
\end{lemma}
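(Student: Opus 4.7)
The structure of Lemma \ref{1.10} is parallel to Lemma \ref{1.5}: sum against an absolutely summable weight $h$, functions $v_n(\cdot)$ playing the role analogous to $u(n-\cdot)$, and one wants to pull the limit inside the sum uniformly. The plan is therefore to repeat the same truncation argument, cutting the sum at a finite level $N$ and exploiting the tail smallness of $h$ on the one hand and the finite-$N$ uniform convergence of $v_n(t)$ on the other.

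Concretely, I would let $M = \sup_{v\in V}\sup_n\sup_t |v_n(t)|$, which is finite by hypothesis, and set $C = \sup_{h\in H}\|h\|_1$, finite by the assumption on $H$. Given $\eps>0$, first choose $N$ so large that $\sup_{h\in H}\sum_{n\ge N+1}|h(n)|\le\eps$; the trivial bound $|v_n(t)|\le M$ and $|v(\infty)|\le M$ then dominates the ``tail'' difference
\[
\Big|\sum_{n>N}h(n)v_n(t)-v(\infty)\sum_{n>N}h(n)\Big|\le 2M\eps.
\]
For the head, use the triangle inequality to bound $\big|\sum_{n\le N}h(n)\bigl(v_n(t)-v(\infty)\bigr)\big|$ by $C\cdot\max_{0\le n\le N}|v_n(t)-v(\infty)|$, and invoke the uniform-in-$v$ convergence $v_n(t)\to v(\infty)$ for each of the finitely many $n\in\{0,\dots,N\}$ to make this at most $C\eps$ for all sufficiently large $t$, uniformly in $v\in V$ and $h\in H$.

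The one step that deserves attention is the passage from ``uniform in $v$ for every $n$'' to ``uniform in $v$ over $n=0,\dots,N$ simultaneously,'' but this is painless because we take the maximum over only finitely many indices after $N$ has been fixed; the fact that the common limit $v(\infty)$ is independent of $n$ is what makes collecting these contributions into a single error $\eps$ legitimate. Combining the head and tail estimates yields $|\sum_n h(n)v_n(t)-v(\infty)\sum_n h(n)|\le (C+2M)\eps$ for all large $t$, uniformly in $v$ and $h$, which is the claim. I do not expect any substantial obstacle; the lemma is essentially a parametrized version of Lemma \ref{1.5} and the proof is a direct transcription of that one with $u(n-k)$ replaced by $v_k(t)$.
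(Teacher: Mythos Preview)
Your proposal is correct and is essentially identical to the paper's proof: the paper simply says that Lemma~\ref{1.5}'s argument carries over with $\sum_n h(n)v_n(t)$ in place of $(u*h)(n)$, $\max_{0\le n\le N}|v_n(t)-v(\infty)|$ in place of $\sup_{k\ge n-N}|u(k)-u(\infty)|$, and $\|v\|_\infty=\sup_{n,t}|v_n(t)|$, which is exactly the head/tail truncation you carry out. Your remark about passing from ``uniform in $v$ for each $n$'' to ``uniform over finitely many $n$'' is the one nontrivial point, and you handle it correctly.
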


\begin{proof}
The proof of Lemma \ref{1.5} needs only trivial modifications: $
\sum_n h(n) v_n(t) $ instead of $ (u*h)(n) $; $ \sum_{n=0}^N
|v_n(t)-v(\infty)| |h(n)| $ instead of $ \sum_{k=0}^N
|u(n-k)-u(\infty)| |h(k)| $; and $ \max_{n=0,\dots,N}
|v_n(t)-v(\infty)| $ (for large $ t $) instead of $ \sup_{k\ge
n-N} |u(k)-u(\infty)| $ (for large $ n $). Also, $ \|v\|_\infty
= \sup_{n,t} |v_n(t)| $.
\end{proof}

Here is a special case of Theorem \ref{t} for step functions.

\begin{lemma}\label{1.11}
Assume that $ M $ and $ H $ are as in Theorem \ref{t}, $ \de>0 $, and
every $ h \in H $ is constant on each $ [n\de, n\de+\de) $. Then the
conclusion of Theorem \ref{t} holds.
\end{lemma}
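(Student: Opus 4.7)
Each $h \in H$ has the form $h = \sum_{k\ge0} h_k \One_{[k\de,(k+1)\de)}$ with $h_k := h(k\de)$. Remark~\ref{1.7} applied at mesh $\de$ shows that the sequences $(h_k)_{k\ge0}$, $h\in H$, satisfy $\sup_{h} \sum_{k} |h_k| < \infty$ and $\sum_{k\ge N}|h_k| \to 0$ uniformly in $h$, so they qualify as a family $H$ for Lemma~\ref{1.10}. A direct computation of the convolution yields $(U_\mu * h)(t) = \sum_{k\ge 0} h_k\, w^\mu_k(t)$, where
\[
w^\mu_k(t) := U_\mu\bigl( (t-(k+1)\de,\, t-k\de] \cap [0,\infty) \bigr);
\]
since $h$ is a step function of mesh $\de$, $\int_0^\infty h(s)\,\D s = \de \sum_{k} h_k$. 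The target thus becomes
\[
\sum_{k\ge 0} h_k\, w^\mu_k(t) \to \la_\mu \de \sum_{k\ge 0} h_k \quad \text{as } t\to\infty,
\]
uniformly in $\mu \in M$ and $h \in H$.

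I would deduce this from Lemma~\ref{1.10} applied to the family $V := \{w^\mu : \mu \in M\}$, with candidate limit $v^\mu(\infty) := \la_\mu \de$, which depends on $\mu$ but not on $k$---exactly as the lemma requires. Two hypotheses need to be verified. First, the pointwise uniform convergence: for fixed $k$ the interval $(t-(k+1)\de,\,t-k\de]$ has length $\de$ and is pushed to $+\infty$ as $t\to\infty$, so Theorem~\ref{1.2} gives $w^\mu_k(t) \to \la_\mu\de$ uniformly in $\mu\in M$. Second, the uniform bound $\sup_{\mu,k,t}w^\mu_k(t) < \infty$: by~\eqref{!} one has $w^\mu_k(t) \le U_\mu([0,\de])$ for every $\mu,k,t$, and the identity
\[
\int_0^\infty \E^{-s}\,U_\mu(\D s) = \frac{1}{1-c_\mu}, \qquad c_\mu := \int_0^\infty \E^{-s}\,\mu(\D s),
\]
then gives $U_\mu([0,\de]) \le \E^\de/(1-c_\mu)$. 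Since $\mu\mapsto c_\mu$ is weakly continuous and $c_\mu < 1$ for every $\mu\in M$ (as $\mu$ puts no mass at $0$), weak compactness of $M$ forces $c := \max_{\mu\in M} c_\mu < 1$, and the desired uniform bound $U_\mu([0,\de]) \le \E^\de/(1-c)$ follows.

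The chief obstacle is this last uniform bound: monotonicity~\eqref{!} absorbs the dependence on $k$ and $t$ effortlessly, but one must rule out the renewal mass near the origin blowing up as $\mu$ varies, and that is where weak compactness is used in an essential way (it prevents mass of $\mu$ from escaping to $0$). With the bound in hand, Lemma~\ref{1.10} finishes the proof.
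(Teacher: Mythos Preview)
Your proof is correct and follows essentially the same route as the paper: write $h=\sum_k h_k\One_{[k\de,(k+1)\de)}$, apply Lemma~\ref{1.10} to the family $v_k(t)=U_\mu\((t-(k+1)\de,\,t-k\de]\)$, get the uniform bound on $v_k$ via \eqref{!} and the Laplace-transform estimate $U_\mu([0,\de))\le \E^\de/(1-\int \E^{-s}\mu(\D s))$ together with weak compactness of $M$, and invoke Theorem~\ref{1.2} for the pointwise limit $\la_\mu\de$. Your write-up is slightly more explicit about why $\max_{\mu\in M}\int \E^{-s}\mu(\D s)<1$, but the argument is the same.
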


\begin{proof}
Lemma \ref{1.10} will be applied to $ \ti H $ and $ V $, where $ \ti H
$ consists of all $ \ti h $ of the form $ \ti h(n) = h(n\de) $ for $
h \in H $, and $ V $ consists of all $ v $ of the form
\[
v_n(\cdot) = U_\mu * \One_{[n\de,n\de+\de)}
\]
for $ \mu \in M $; that is, $ v_n(t) = U_\mu \( (t-n\de-\de,t-n\de] \)
$. By \eqref{!},
\[
v_n(t) \le U_\mu \( [0,\de) \) \le \E^\de \int \E^{-s} U_\mu(\D s)
= \frac{\E^\de}{ 1 - \int \E^{-s} \mu(\D s) } \, ;
\]
by compactness of $ M $,
\[
\sup_{v,n,t} |v_n(t)| \le \frac{\E^\de}{ 1
- \max_\mu \int \E^{-s} \mu(\D s) } < \infty \, .
\]
By Theorem \ref{1.2}, for every $ n $, $ v_n(t) \to \la_\mu \de $ as $
t \to \infty $, uniformly in $ v $. Thus, $ V $ satisfies the
conditions of Lemma \ref{1.10}. By Remark \ref{1.7}, $ \ti H $
satisfies the conditions (for $ H $) of Lemma \ref{1.10}, that is, of
Lemma \ref{1.5}. It remains to apply Lemma \ref{1.10} and take into
account that $ v(\infty) = \la_\mu \de $, $ \de \sum_n \ti h(n)
= \int_0^\infty h(s) \, \D s $ and $ \sum_n \ti h(n) v_n(\cdot) =
U_\mu * h $ since $ \sum_n h(n\de) \One_{[n\de,n\de+\de)} = h $.
\end{proof}

\begin{proof}[Proof of Theorem \ref{t}]
For arbitrary $ \de > 0 $ and $ h \in H $ we introduce $ h^-_\de,
h^+_\de : [0,\infty) \to \R $ by
\[
h^-_\de(t) = \inf_{[n\de,n\de+\de)} h(\cdot) \, , \quad h^+_\de(t)
= \sup_{[n\de,n\de+\de)} h(\cdot) \quad \text{for } t \in
[n\de,n\de+\de) \, ,
\]
then $ h^-_\de \le h \le h^+_\de $. The sets $ H^-_\de = \{ h^-_\de :
h \in H \} $, $ H^+_\de = \{ h^+_\de : h \in H \} $ are uniformly
directly Riemann integrable by the arguments of Remarks \ref{1.7},
\ref{1.8}. Applying Lemma \ref{1.11} to $ M $ and $ H^\pm_\de $ we get
\[
(U_\mu*h^\pm_\de) (t) \to \la_\mu \int_0^\infty h^\pm_\de(s) \, \D
s \quad \text{as } t \to \infty
\]
uniformly in $ \mu \in M $ and $ h \in H $.

Given $ \eps > 0 $, we choose $ \de = \de_\eps $ such that $ \int |
h^\pm_\de(t) - h(t) | \, \D t \le \eps $ for all $ h \in H $. Then we
choose $ t_\eps $ such that for all $ t \ge t_\eps $, $ \mu \in M $
and $ h \in H $,
\[
\Big| (U_\mu*h^\pm_\de) (t) - \la_\mu \int_0^\infty h^\pm_\de(s) \, \D
s \Big| \le \eps \, .
\]
We get
\begin{multline*}
(U_\mu*h)(t) - \la_\mu \int h(s) \, \D s \le \\
\le (U_\mu*h^+_\de) (t) - \la_\mu \int_0^\infty h^+_\de(s) \, \D s
 + \la_\mu \Big( \int_0^\infty h^+_\de(s) \, \D s - \int_0^\infty
 h(s) \, \D s \Big) \le \\
\le \eps + \la_\mu \eps
\end{multline*}
and a similar lower bound; thus, using \eqref{1.4},
\[
\Big| (U_\mu*h)(t) - \la_\mu \int h(s) \, \D s \Big| \le \eps \Big( 1
  + \max_{\mu\in M} \la_\mu \Big)
\]
for all $ t \ge t_\eps $, $ \mu \in M $ and $ h \in H $.
\end{proof}

\section[Uniform large deviations]
  {\raggedright Uniform large deviations}
\label{sect2}
Theorem \ref{Th2} is proved in this section.

Exponential moments of a renewal-reward process boil down to a renewal
equation, see \cite[Th. 5]{GW94}, and therefore to an auxiliary
renewal process, as sketched below.

Having $ \eta_\la $ satisfying \eqref{*****} for a given $ \la $, we
introduce a random variable $ \tau_\la $ distributed so that
\begin{equation}\label{2.1}
\Ex f(\tau_\la) = \Ex \( \E^{\la X - \eta_\la \tau} f(\tau) \)
\end{equation}
for all bounded Borel functions $ f : \R \to \R $. That is,
\begin{equation}\label{2.13}
\begin{gathered}
\tau \text{ is distributed } \mu, \quad \tau_\la \text{ is distributed
 } \mu_\la, \\
\frac{ \D\mu_\la }{ \D\mu } (\tau) = \cE{ \E^{\la X - \eta_\la \tau}
}{\tau} \, .
\end{gathered}
\end{equation}
Then, using the notation $ \Ex (Z;A) =
\Ex(Z\cdot\One_A) $, we have
\[
\Ex ( \E^{\la S(t)}; \, \tau_1+\dots+\tau_n \le t <
\tau_1+\dots+\tau_{n+1} ) = \E^{\eta_\la t} \Ex h_\la ( t -
\tau_{\la,1} - \dots - \tau_{\la,n} ) \, ,
\]
where $ \tau_{\la,1}, \tau_{\la,2}, \dots $ are independent copies of
$ \tau_\la $, and
\begin{equation}\label{2.17}
h_\la (t) = \begin{cases}
 \E^{-\eta_\la t} \Pr{ \tau_\la>t } &\text{for } t \ge 0, \\
 0 &\text{for } t < 0.
\end{cases}
\end{equation}
Summing up we get
\begin{equation}\label{2.2}
\Ex \E^{\la S(t)} = \E^{\eta_\la t} \sum_{n=0}^\infty \Ex h_\la (
t - \tau_{\la,1} - \dots - \tau_{\la,n} ) = \E^{\eta_\la t} (
U_\la * h_\la ) (t) \, ;
\end{equation}
here $ U_\la = U_{\mu_\la} $ is the renewal measure
(recall \eqref{1.05}).

Recall assumptions \eqref{*} $ \Ex \tau < \infty $ and \eqref{****}
$ \forall \la \in \R \; \forall \eps > 0 \quad \Ex \exp ( \la X
- \eps \tau ) < \infty $.

\begin{remark}\label{2.25}
Assumptions \eqref{*}, \eqref{****} imply $ \Ex |X| < \infty $.
\end{remark}

\begin{proof}
$ |X| \le \tau + \E^{|X|-\tau} \le \tau + \E^{-X-\tau} + \E^{X-\tau} $
is integrable.
\end{proof}

From now on, till the end of this section, we assume the conditions of
Theorem \ref{Th2}; that is, \eqref{*}, \eqref{****}, and $ \Ex X = 0
$. We also assume that $ \Pr{ X=0 } \ne 1 $; otherwise
Theorem \ref{Th2} is trivial.

\begin{lemma}\label{2.3}
Maps $ (\la,\eta) \mapsto \exp ( \la X - \eta \tau ) $ and $
(\la,\eta) \mapsto \tau \exp ( \la X - \eta \tau ) $ are continuous
from $ \R \times (0,\infty) $ to the space $ L_1 $ of integrable
random variables.
\end{lemma}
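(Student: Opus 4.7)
The plan is to establish continuity of both maps at an arbitrary point $(\la_0,\eta_0)\in\R\times(0,\infty)$ by a dominated convergence argument on a compact neighborhood. I would fix a closed box $K=[-\Lambda,\Lambda]\times[\eps_0,E]$ containing $(\la_0,\eta_0)$ in its interior with $0<\eps_0\le E<\infty$, and show that both families $\{\E^{\la X-\eta\tau}:(\la,\eta)\in K\}$ and $\{\tau\E^{\la X-\eta\tau}:(\la,\eta)\in K\}$ are dominated by single integrable random variables. Combined with the a.s.\ pointwise continuity of each map in $(\la,\eta)$, dominated convergence then gives $L_1$-continuity at $(\la_0,\eta_0)$.

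For the first family I would use the bound
\[
\E^{\la X-\eta\tau}\le\E^{\Lambda|X|-\eps_0\tau}\le\(\E^{\Lambda X}+\E^{-\Lambda X}\)\E^{-\eps_0\tau},
\]
and both $\E^{\Lambda X-\eps_0\tau}$ and $\E^{-\Lambda X-\eps_0\tau}$ are integrable by assumption \eqref{****} (applied with $\la=\pm\Lambda$ and $\eps=\eps_0$). Note that strict positivity of $\eta_0$ is essential: it is precisely what guarantees $\eps_0>0$, so that the exponential decay in $\tau$ compensates a possibly heavy positive tail of $\tau$ under \eqref{*}.

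For the second family the only new issue is the $\tau$-prefactor, which I would absorb via the elementary inequality $\tau\E^{-(\eps_0/2)\tau}\le 2/(\E\eps_0)$. This yields
\[
\tau\E^{\la X-\eta\tau}\le\frac{2}{\E\eps_0}\(\E^{\Lambda X}+\E^{-\Lambda X}\)\E^{-(\eps_0/2)\tau},
\]
again integrable by \eqref{****}, this time with $\eps=\eps_0/2$. The only mild obstacle in the whole argument is therefore handling this $\tau$-prefactor, which merely costs a factor of two in the decay rate; the remainder is a routine application of dominated convergence.
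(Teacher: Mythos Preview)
Your proposal is correct and follows essentially the same approach as the paper: dominated convergence on a compact parameter set, using the integrable majorant $\E^{\Lambda X-\eps\tau}+\E^{-\Lambda X-\eps\tau}$ supplied by \eqref{****}, and absorbing the $\tau$-prefactor by sacrificing half of the exponential decay in $\tau$ (the paper writes this as $\tau\le\frac1{\E\eps}\E^{\eps\tau}$ and works on $[-C,C]\times[2\eps,\infty)$, which is the same idea). The only cosmetic difference is that the paper uses an $\eta$-unbounded strip rather than a box, which is immaterial since continuity is local.
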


\begin{proof}
It is sufficient to prove the continuity on $ [-C,C] \times
[2\eps,\infty) $ for arbitrary $ C,\eps>0 $. Also, it is sufficient to
consider the map $ (\la,\eta) \mapsto \E^{\eps\tau} \exp ( \la X
- \eta \tau ) $, since $ \tau \le \frac1{\E\eps} \E^{\eps\tau} $
a.s. We apply the dominated convergence theorem, taking into account
that $ \exp ( -C X - \eps \tau ) + \exp ( C X - \eps \tau ) $ is an
integrable majorant of $ \E^{\eps\tau} \exp ( \la X - \eta \tau ) $
for all $ \la \in [-C,C] $ and $ \eta \in [2\eps,\infty) $.
\end{proof}

\begin{lemma}\label{2.4}
For every $ \la $ there is one and only one $ \eta_\la $
satisfying \eqref{*****} $ \Ex \exp ( \la X - \eta_\la \tau ) = 1 $,
and the function $ \la \mapsto \eta_\la $ is continuous on $ \R $.
\end{lemma}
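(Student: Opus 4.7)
The strategy is to set $F(\la,\eta) := \Ex \exp(\la X - \eta\tau)$ and view $\eta_\la$ as the unique solution of $F(\la,\eta_\la) = 1$ with $\eta_\la \in [0,\infty)$. Both existence/uniqueness and continuity of $\la \mapsto \eta_\la$ will be extracted from joint continuity of $F$ on $\R \times (0,\infty)$ (Lemma \ref{2.3}) combined with strict monotonicity in $\eta$.

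\textbf{Existence and uniqueness.} For fixed $\la$, $F(\la,\cdot)$ is finite on $(0,\infty)$ by \eqref{****} and strictly decreasing there, since $\tau>0$ a.s. Dominated convergence with majorant $\exp(\la X - \eps\tau)$ (for a fixed small $\eps>0$) gives $F(\la,\eta) \to 0$ as $\eta \to \infty$. Monotone convergence gives $F(\la,\eta) \nearrow \Ex \exp(\la X) \in [1,\infty]$ as $\eta \to 0+$; the lower bound $\ge 1$ is Jensen's inequality together with $\Ex X = 0$, and since $\Pr{X=0} < 1$ this inequality is strict whenever $\la \ne 0$. For $\la = 0$, $F(0,\eta) = \Ex \exp(-\eta\tau) < 1$ for every $\eta > 0$, so the only root in $[0,\infty)$ is $\eta_0 = 0$. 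For $\la \ne 0$, one has $F(\la,\eta) > 1$ for small $\eta > 0$ and $F(\la,\eta) \to 0$ as $\eta \to \infty$; the intermediate value theorem applied to the continuous strictly decreasing function $F(\la,\cdot)$ produces a unique $\eta_\la \in (0,\infty)$.

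\textbf{Continuity.} Let $\la_n \to \la_0$. If $\la_0 \ne 0$, pick any $\eps \in (0, \eta_{\la_0})$; strict monotonicity of $F(\la_0,\cdot)$ gives
\[
F(\la_0, \eta_{\la_0} - \eps) > 1 > F(\la_0, \eta_{\la_0} + \eps),
\]
and Lemma \ref{2.3} transfers both strict inequalities to $F(\la_n,\cdot)$ for $n$ large, forcing $\eta_{\la_n} \in (\eta_{\la_0} - \eps, \eta_{\la_0} + \eps)$. If $\la_0 = 0$, so $\eta_0 = 0$, fix any $\eps > 0$; then $F(0,\eps) < 1$, so by Lemma \ref{2.3} $F(\la_n,\eps) < 1$ for $n$ large, and strict monotonicity gives $\eta_{\la_n} < \eps$.

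\textbf{Main obstacle.} The only delicate points are strictness of Jensen when $\la \ne 0$ (which relies on the standing assumption $\Pr{X=0} < 1$) and the fact that at $\la_0 = 0$ the root $\eta_0 = 0$ sits on the boundary, where Lemma \ref{2.3} does not apply directly; this boundary case is handled by the separate one-sided argument above and is otherwise no harder than the generic case.
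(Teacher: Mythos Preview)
Your proof is correct and follows essentially the same route as the paper's: define $\psi(\la,\eta)=\Ex\exp(\la X-\eta\tau)$, use strict monotonicity in $\eta$ together with $\psi(\la,0+)=\Ex\E^{\la X}>1$ (Jensen, $\Ex X=0$, $X$ non-degenerate) and $\psi(\la,+\infty)=0$ for existence/uniqueness, then the two-sided/one-sided continuity argument at $\la_0\ne0$ and $\la_0=0$ respectively via Lemma~\ref{2.3}. The only cosmetic differences are your explicit appeals to dominated and monotone convergence for the limits in $\eta$, which the paper leaves implicit.
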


\begin{proof}
The function $ \psi : \R \times (0,\infty) \to (0,\infty) $ defined by
$ \psi(\la,\eta) = \Ex \exp ( \la X - \eta \tau ) $ is continuous by
Lemma \ref{2.3}. For every $ \la $ the function $ \psi(\la,\cdot) $ is
strictly decreasing, $ \psi(\la,+\infty) = 0 $, and (possibly,
infinite) $ \psi(\la,0+) = \Ex \exp \la X > \exp \la \Ex X = 1 $
provided that $ \la \ne 0 $. Thus, for $ \la \ne 0 $ we get unique
$ \eta_\la > 0 $; and trivially, $ \eta_0 = 0 $.

It remains to prove continuity of the function $ \la \mapsto \eta_\la
$. Given $ \la_0 \ne 0 $ and $ \eps < \eta_{\la_0} $ we note that
$ \psi(\la_0,\eta_{\la_0}+\eps) < 1 = \psi(\la_0,\eta_{\la_0})
< \psi(\la_0,\eta_{\la_0}-\eps) $ and take $ \de > 0 $ such that
$ \psi(\la,\eta_{\la_0}+\eps) < 1 = \psi(\la,\eta_\la)
< \psi(\la,\eta_{\la_0}-\eps) $ and therefore $ \eta_{\la_0}-\eps
< \eta_\la < \eta_{\la_0}+\eps $ for all $ \la \in
(\la_0-\de,\la_0+\de) $. For $ \la_0 = 0 $ we use a one-sided version
of the same argument: given $ \eps > 0 $, we take $ \de > 0 $ such
that $ \psi(\la,\eps) < 1 $ and therefore $ \eta_\la < \eps $ for all
$ \la \in (-\de,\de) $.
\end{proof}

Recall measures $ \mu, \mu_\la $ given by \eqref{2.13}.

\begin{lemma}\label{2.5}
The function $ \la \mapsto \mu_\la $ is continuous from $
(-\infty,0) \cup (0,\infty) $ to the space of measures with the norm
topology.
\end{lemma}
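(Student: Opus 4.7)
The plan is to express the total variation norm of $\mu_\lambda - \mu_{\lambda_0}$ in terms of the densities given by \eqref{2.13}, and then reduce continuity to the $L_1$-continuity supplied by Lemma \ref{2.3}.

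First, by the definition of total variation and \eqref{2.13},
\[
\|\mu_\lambda - \mu_{\lambda_0}\| = \int \bigl| \cE{ \E^{\la X - \eta_\la \tau} }{\tau} - \cE{ \E^{\la_0 X - \eta_{\la_0} \tau} }{\tau} \bigr| \, \D\mu(\tau) \, .
\]
Applying the conditional Jensen inequality $\bigl| \cE{ Z_1 - Z_2 }{\tau} \bigr| \le \cE{ |Z_1 - Z_2| }{\tau}$ and then taking the outer expectation, I obtain the key bound
\[
\|\mu_\lambda - \mu_{\lambda_0}\| \le \Ex \bigl| \E^{\la X - \eta_\la \tau} - \E^{\la_0 X - \eta_{\la_0} \tau} \bigr| \, .
\]

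Next, I would argue that the right-hand side tends to $0$ as $\la \to \la_0$ in $(-\infty,0) \cup (0,\infty)$. By Lemma \ref{2.4}, the function $\la \mapsto \eta_\la$ is continuous and strictly positive on $(-\infty,0) \cup (0,\infty)$, so $(\la,\eta_\la)$ stays in $\R \times (0,\infty)$ as $\la$ varies in a small neighbourhood of $\la_0$. Lemma \ref{2.3} then gives continuity of $(\la,\eta) \mapsto \E^{\la X - \eta \tau}$ from $\R \times (0,\infty)$ to $L_1$, and composition with the continuous curve $\la \mapsto (\la,\eta_\la)$ yields $\E^{\la X - \eta_\la \tau} \to \E^{\la_0 X - \eta_{\la_0} \tau}$ in $L_1$, which is exactly what we need.

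There is no serious obstacle here; the only point worth flagging is why the origin must be excluded. Lemma \ref{2.3} provides $L_1$-continuity only on the half-plane $\eta > 0$, and at $\la = 0$ one has $\eta_0 = 0$, so $(\la,\eta_\la)$ touches the boundary where the integrable majorant of Lemma \ref{2.3} fails (one would need a stronger integrability hypothesis on $X$ alone to cross it). Removing $\la = 0$ keeps the curve $\la \mapsto (\la,\eta_\la)$ safely inside the region of validity, and the two-line bound above completes the proof.
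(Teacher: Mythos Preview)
Your proof is correct and essentially identical to the paper's own argument: both write $\|\mu_\la-\mu_{\la_0}\|$ as the $L_1$ distance between the conditional-expectation densities, bound it via the conditional Jensen inequality by $\Ex\bigl|\E^{\la X-\eta_\la\tau}-\E^{\la_0 X-\eta_{\la_0}\tau}\bigr|$, and conclude using Lemmas \ref{2.3} and \ref{2.4}. Your additional remark explaining why $\la=0$ must be excluded is accurate and matches the reason the stronger Lemma \ref{3.45} is needed later in Section \ref{sect3}.
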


\begin{proof}
We have $ \D\mu_\la / \D\mu = \phi_\la $, where $ \phi_\la $ is
defined by $ \phi_\la(\tau) = \cE{ \E^{\la X - \eta_\la \tau } }{\tau}
$. If $ \la_n \to \la \ne 0 $ then, using Lemmas \ref{2.3}
and \ref{2.4}, $ \| \mu_{\la_n} - \mu_\la \| = \int | \phi_{\la_n}
- \phi_\la | \, \D\mu = \Ex | \phi_{\la_n}(\tau) - \phi_\la(\tau) |
= \Ex | \cE{ \E^{\la_n X - \eta_{\la_n} \tau } }{\tau} - \cE{ \E^{\la
X - \eta_\la \tau } }{\tau} | \le \Ex | \E^{\la_n X
- \eta_{\la_n} \tau } - \E^{\la X - \eta_\la \tau } | \to 0 $ as $
n \to \infty $.
\end{proof}

\begin{lemma}\label{2.6}
The set $ \{ \mu_\la : \la \in [-C,-c] \cup [c,C] \} $ satisfies the
conditions of Theorem \ref{1.2} whenever $ 0 < c < C < \infty $.
\end{lemma}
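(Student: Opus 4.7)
The three conditions of Theorem~\ref{1.2} to verify for the family $M := \{\mu_\la : \la \in K_0\}$, where $K_0 := [-C,-c] \cup [c,C]$, are weak compactness, constant span, and uniform integrability. For weak compactness, observe that $K_0$ is compact and bounded away from $0$, so by Lemma~\ref{2.5} the map $\la \mapsto \mu_\la$ is continuous from $K_0$ into the probability measures with the norm topology. Hence $M$ is a continuous image of a compact set, in particular norm-compact and therefore weakly compact as a set of measures on $\R$. For constant span, note that $\D\mu_\la/\D\mu = \phi_\la$ with $\phi_\la(\tau) = \cE{\E^{\la X - \eta_\la \tau}}{\tau} > 0$ $\mu$-a.s., so $\mu_\la$ and $\mu$ are mutually absolutely continuous; in particular $\Span(\mu_\la) = \Span(\mu)$ for every $\la \in K_0$.

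The main step is uniform integrability. Put $\eta_* := \min_{\la \in K_0} \eta_\la$; by Lemma~\ref{2.4} and compactness of $K_0$, $\eta_* > 0$ (this is where separating $K_0$ from the origin matters). For $\la \in K_0$, using $|\la| \le C$ and $\tau \ge 0$,
\[
\tau \, \E^{\la X - \eta_\la \tau} \le \tau \, \E^{-\eta_* \tau} \bigl( \E^{CX} + \E^{-CX} \bigr) \le K \, \E^{-\eta_* \tau/2} \bigl( \E^{CX} + \E^{-CX} \bigr),
\]
where $K := \sup_{s \ge 0} s \, \E^{-\eta_* s/2} < \infty$. The right-hand side is integrable by \eqref{****} applied with $\la = \pm C$ and $\eps = \eta_*/2$. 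Consequently,
\[
\int_{[a,\infty)} t \, \mu_\la(\D t) = \Ex \bigl( \tau \, \E^{\la X - \eta_\la \tau} \One_{\tau \ge a} \bigr) \le K \, \Ex \Bigl( (\E^{CX} + \E^{-CX}) \, \E^{-\eta_* \tau/2} \One_{\tau \ge a} \Bigr),
\]
and the right-hand side tends to $0$ as $a \to \infty$ by dominated convergence, uniformly in $\la \in K_0$.

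I do not expect a serious obstacle here; once the right majorant is identified the argument is routine bookkeeping. The only conceptual point is that separating $K_0$ from $\la = 0$ is what produces $\eta_* > 0$, and this in turn is what both makes the majorant from \eqref{****} integrable and supports the norm-continuity of $\la \mapsto \mu_\la$ invoked from Lemma~\ref{2.5}. Without such a gap, $\eta_\la$ could approach $0$ and both mechanisms would degenerate, which presumably explains why Theorem~\ref{Th2} is stated uniformly only on sets bounded away from $\la = 0$.
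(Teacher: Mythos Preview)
Your proof is correct. The treatment of weak compactness and constant span is identical to the paper's. For uniform integrability you take a slightly different, more hands-on route: you build a single explicit integrable majorant $K\,\E^{-\eta_*\tau/2}(\E^{CX}+\E^{-CX})$ using $\eta_* = \min_{\la\in K_0}\eta_\la > 0$ and assumption~\eqref{****}, and then apply dominated convergence once. The paper instead argues abstractly that the family $\{\tau\exp(\la X-\eta_\la\tau):\la\in K_0\}$ is a compact subset of $L_1$ (as a continuous image of the compact set $K_0$, via Lemmas~\ref{2.3} and~\ref{2.4}), and compact sets in $L_1$ are uniformly integrable. Your approach is more elementary and self-contained; the paper's is shorter and reuses the $L_1$-continuity machinery already in place, which also pays off later when the same device is recycled in Section~\ref{sect3}. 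Your closing remark about why the gap at $\la=0$ is essential is exactly right and anticipates the role of Lemma~\ref{3.45} in extending the argument across $\la=0$ under the stronger hypothesis~\eqref{**}.
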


\begin{proof}
Lemma \ref{2.5} ensures compactness (even in a topology stronger than
needed). For every $ \la $ measures $ \mu_\la $ and $ \mu $ are
mutually absolutely continuous, therefore $ \Span(\mu_\la)
= \Span(\mu) $. It remains to prove uniform
integrability. Using \eqref{2.1} we have
$ \int_{[a,\infty)} t \, \mu_\la(\D t)
= \Ex \( \tau_\la \One_{[a,\infty)}(\tau_\la) \)
= \Ex \( \tau \exp(\la X-\eta_\la \tau) \One_{[a,\infty)}(\tau) \) \to
0 $ as $ a\to\infty $ uniformly in $ \la \in [-C,-c] \cup [c,C] $,
since random variables $ \tau \exp(\la X-\eta_\la \tau) $ for these
$ \la $ are a compact subset of $ L_1 $ by Lemmas \ref{2.3}, \ref{2.4}.
\end{proof}

Recall functions $ h_\la $ given by \eqref{2.17}.

\begin{lemma}\label{2.7}
The set $ \{ h_\la : \la \in [-C,-c] \cup [c,C] \} $ is uniformly
directly Riemann integrable whenever $ 0 < c < C < \infty $.
\end{lemma}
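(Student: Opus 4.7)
The plan is to apply Remark~\ref{1.85}, which requires verifying that each $ h_\la $ is a decreasing nonnegative function on $ [0,\infty) $ and then checking the three scalar conditions listed there. The decreasing property is immediate: for $ \la \ne 0 $ (which holds throughout $ [-C,-c]\cup[c,C] $), Lemma~\ref{2.4} gives $ \eta_\la > 0 $, so $ \E^{-\eta_\la t} $ is decreasing in $ t $, while $ \Pr{\tau_\la > t} $ is decreasing as a tail probability; their product $ h_\la(t) $ is therefore decreasing in $ t $, and clearly nonnegative.

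Next I would dispose of the pointwise value at $ 0 $: since $ \mu_\la $ is absolutely continuous with respect to $ \mu $ (by \eqref{2.13}) and $ \mu $ lives on $ (0,\infty) $, we have $ \tau_\la > 0 $ a.s., hence $ h_\la(0) = \Pr{\tau_\la > 0} = 1 $, which bounds $ \sup_\la h_\la(0) $.

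The main (and only substantive) step is to obtain a uniform positive lower bound on $ \eta_\la $. By Lemma~\ref{2.4}, $ \la \mapsto \eta_\la $ is continuous and strictly positive on $ [-C,-c]\cup[c,C] $, and this set is compact, so
\[
\eta_* = \min_{\la \in [-C,-c]\cup[c,C]} \eta_\la > 0 \, .
\]
Using $ \Pr{\tau_\la > s} \le 1 $, I then estimate
\[
\int_a^\infty h_\la(s) \, \D s \le \int_a^\infty \E^{-\eta_\la s} \, \D s
= \frac{\E^{-\eta_\la a}}{\eta_\la} \le \frac{\E^{-\eta_* a}}{\eta_*} \, ,
\]
valid for every $ \la \in [-C,-c]\cup[c,C] $ and every $ a \ge 0 $. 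Taking $ a = 0 $ yields $ \sup_\la \int_0^\infty h_\la(s) \, \D s \le 1/\eta_* < \infty $, and letting $ a \to \infty $ yields the uniform vanishing of the tail integral. All three conditions of Remark~\ref{1.85} are therefore met, so $ \{ h_\la : \la \in [-C,-c]\cup[c,C] \} $ is uniformly directly Riemann integrable.

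I do not anticipate any genuine obstacle here: the whole argument rests on the compactness/continuity input that was already established in Lemma~\ref{2.4}, together with the crude bound $ \Pr{\tau_\la > s} \le 1 $. The only place where one must be a little careful is to notice that the decreasing property fails at $ \la = 0 $ (where $ \eta_\la = 0 $ and $ h_\la \equiv \Pr{\tau > t} $ need not be integrable), which is exactly why the hypothesis excludes a neighborhood of zero.
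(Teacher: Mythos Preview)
Your proof is correct, but it takes a different route from the paper's. Both arguments invoke Remark~\ref{1.85} and the trivial bound $h_\la(0)\le 1$; the difference lies in how the tail integral $\int_a^\infty h_\la(s)\,\D s$ is controlled. The paper discards the exponential factor via $\E^{-\eta_\la s}\le 1$ and then bounds $\int_a^\infty \Pr{\tau_\la>s}\,\D s \le \int_{[a,\infty)} t\,\mu_\la(\D t)$, appealing to the uniform integrability of $\{\mu_\la\}$ established in Lemma~\ref{2.6} (together with \eqref{1.4} for the case $a=0$). You do the opposite: discard the tail probability via $\Pr{\tau_\la>s}\le 1$ and exploit the exponential directly, using only the positivity and continuity of $\eta_\la$ from Lemma~\ref{2.4} to get $\eta_\la\ge\eta_*>0$ on the compact parameter set. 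Your version is a bit more self-contained, since it does not need Lemma~\ref{2.6}; the paper's version, on the other hand, survives unchanged when the parameter set is enlarged to include $\la=0$ (as in Sect.~\ref{sect3}), whereas your bound $\E^{-\eta_* a}/\eta_*$ degenerates there.

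One small correction to your closing remark: at $\la=0$ the function $h_0(t)=\Pr{\tau>t}$ is still decreasing, and under \eqref{*} it \emph{is} integrable ($\int_0^\infty \Pr{\tau>t}\,\D t=\Ex\tau<\infty$). What actually breaks down at $\la=0$ is not the direct Riemann integrability of $h_0$ but your particular estimate via $\eta_*$.
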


\begin{proof}
Follows from Remark \ref{1.85}, \eqref{1.4} and the uniform
integrability of measures $ \mu_\la $, since $ h_\la(0) \le 1 $ and
\begin{multline*}
\int_a^\infty h_\la(t) \, \D t = \int_a^\infty \E^{-\eta_\la
 t} \Pr{ \tau_\la > t } \, \D t \le \int_a^\infty \mu_\la \(
 (t,\infty) \) \, \D t = \\
= \int_a^\infty (t-a) \, \mu_\la(\D t) \le \int_{[a,\infty)}
 t \, \mu_\la(\D t) \, .
\end{multline*}
\end{proof}

\begin{lemma}\label{2.8}
The map $ \la \mapsto h_\la $ is continuous from $ (-\infty,0) \cup
(0,\infty) $ to $ L_1(0,\infty) $.
\end{lemma}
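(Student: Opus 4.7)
The plan is to write $h_\la(t) = \E^{-\eta_\la t} g_\la(t)$, where $g_\la(t) = \mu_\la\bigl((t,\infty)\bigr) = \Pr{\tau_\la > t}$ is the survival function of $\mu_\la$, and then separate the two sources of $\la$-dependence using the two continuity results just established: Lemma~\ref{2.4} gives continuity of $\la \mapsto \eta_\la$ (with $\eta_\la > 0$ for $\la \ne 0$, noted in its proof), and Lemma~\ref{2.5} gives norm-continuity of $\la \mapsto \mu_\la$ off $0$.

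Fix $\la \ne 0$ and let $\la_n \to \la$. The triangle inequality gives
\[
|h_{\la_n}(t) - h_\la(t)| \le \E^{-\eta_{\la_n} t} |g_{\la_n}(t) - g_\la(t)| + \bigl|\E^{-\eta_{\la_n} t} - \E^{-\eta_\la t}\bigr| \, g_\la(t) \, ,
\]
and I would integrate each piece separately. For the first, the total-variation bound $|g_{\la_n}(t) - g_\la(t)| \le \|\mu_{\la_n} - \mu_\la\|$, uniform in $t$, yields
\[
\int_0^\infty \E^{-\eta_{\la_n} t} |g_{\la_n}(t) - g_\la(t)| \, \D t \le \frac{\|\mu_{\la_n} - \mu_\la\|}{\eta_{\la_n}} \tendsn 0 \, ,
\]
by Lemma~\ref{2.5} together with the fact that $\eta_{\la_n} \to \eta_\la > 0$ stays bounded away from $0$. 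For the second piece, I would apply dominated convergence: the integrand tends pointwise to $0$ by continuity of $\eta$, and for $n$ large enough that $\eta_{\la_n} \ge \eta_\la/2$, the factor $\bigl|\E^{-\eta_{\la_n} t} - \E^{-\eta_\la t}\bigr|$ is dominated by $2\E^{-(\eta_\la/2) t}$, giving the integrable majorant $2\E^{-(\eta_\la/2) t}$ (since $g_\la \le 1$).

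I do not expect any real obstacle here: the lemma is essentially a direct combination of Lemmas~\ref{2.4} and~\ref{2.5} with a routine splitting of the product $\E^{-\eta_\la t} g_\la(t)$. The only point requiring vigilance is keeping $\eta_{\la_n}$ bounded away from $0$, which is automatic once we know $\eta_\la > 0$ at the limit point $\la \ne 0$; this is also why the domain of continuity excludes $\la = 0$.
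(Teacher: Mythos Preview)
Your proof is correct. The splitting $h_\la = \E^{-\eta_\la t} g_\la(t)$ and the two estimates are sound; in particular the total-variation bound $|g_{\la_n}(t)-g_\la(t)|\le\|\mu_{\la_n}-\mu_\la\|$ combined with $\int_0^\infty \E^{-\eta_{\la_n}t}\,\D t = 1/\eta_{\la_n}$ handles the first piece cleanly, and the exponential majorant $2\E^{-(\eta_\la/2)t}$ for the second piece is valid once $\eta_{\la_n}\ge\eta_\la/2$.

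The paper argues differently: it first observes pointwise continuity of $\la\mapsto h_\la(t)$ together with the crude bound $h_\la(t)\le1$, which by dominated convergence gives continuity of $\la\mapsto h_\la|_{(0,a)}$ in $L_1(0,a)$ for each finite $a$; then it appeals to Lemma~\ref{2.7} (uniform direct Riemann integrability, hence uniform smallness of the tails $\int_a^\infty h_\la$) to pass to $a\to\infty$ locally uniformly in $\la$. Your route is more self-contained: by exploiting the exponential factor $\E^{-\eta_\la t}$ explicitly you never need Lemma~\ref{2.7}, so the argument stands on Lemmas~\ref{2.4} and~\ref{2.5} alone. The paper's route, on the other hand, is organized so that the same tail control (Lemma~\ref{2.7}) serves both here and in the verification of the hypotheses of Theorem~\ref{t} later on.
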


\begin{proof}
For every $ t > 0 $, $ h_\la(t) = \E^{-\eta_\la t} \mu_\la \(
(t,\infty) \) $ is continuous in $ \la \ne 0 $ by
Lemmas \ref{2.4}, \ref{2.5}. Also, $ h_\la(t) \le 1 $. Thus,
$ \la \mapsto h_\la |_{(0,a)} \in L_1(0,a) $ is continuous (by the
dominated convergence theorem). The limit as $ a \to \infty $ is
locally uniform around a given $ \la \ne 0 $ due to the uniform
integrability of $ h_\la $ (proved in Lemma \ref{2.7}).
\end{proof}

Functions $ h_\la $ as elements of $ L_1(0,\infty) $ are relevant in
the nonlattice case, when $ \Span(\mu)=0 $, while in the lattice case,
when $ \Span(\mu) = \de > 0 $, we treat sequences $ \(
h_\la(k\de) \)_{k=0}^\infty $ as elements of the space $ l_1 $ of
summable sequences.

\begin{lemma}\label{2.9}
Let $ \Span(\mu) = \de > 0 $, then the map $ \la \mapsto \(
h_\la(k\de) \)_{k=0}^\infty $ is continuous from $ (-\infty,0) \cup
(0,\infty) $ to $ l_1 $.
\end{lemma}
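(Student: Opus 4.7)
The plan is to reduce $l_1$ continuity of $\la\mapsto\bigl(h_\la(k\de)\bigr)_{k=0}^\infty$ to the $L_1(0,\infty)$ continuity already established in Lemma~\ref{2.8}, via one explicit identity expressing each value $h_\la(k\de)$ as a constant multiple of the integral of $h_\la$ over $[k\de,(k+1)\de)$.

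First I would exploit the lattice structure, as in the proof of Lemma~\ref{2.6}: because $\mu_\la$ and $\mu$ are mutually absolutely continuous and $\Span(\mu)=\de$, the measure $\mu_\la$ is concentrated on $\{\de,2\de,3\de,\dots\}$. Hence $\Pr{\tau_\la>t}$ is constant on each half-open interval $[k\de,(k+1)\de)$, so on that interval
\[
h_\la(t) = \E^{-\eta_\la t}\Pr{\tau_\la>t} = \E^{-\eta_\la(t-k\de)}\,h_\la(k\de).
\]
Using $\eta_\la>0$ for $\la\ne0$ (Lemma~\ref{2.4}), integrating over $[k\de,(k+1)\de)$ yields the key identity
\[
h_\la(k\de) = c_\la \int_{k\de}^{(k+1)\de} h_\la(t)\,\D t, \qquad c_\la := \frac{\eta_\la}{1-\E^{-\eta_\la\de}},
\]
and $\la\mapsto c_\la$ is continuous on $(-\infty,0)\cup(0,\infty)$ by Lemma~\ref{2.4}.

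Then, for $\la_n\to\la\ne0$, summing over $k$ and applying the triangle inequality gives
\[
\sum_{k=0}^\infty |h_{\la_n}(k\de)-h_\la(k\de)| \le |c_{\la_n}-c_\la|\,\|h_{\la_n}\|_{L_1(0,\infty)} + c_\la\,\|h_{\la_n}-h_\la\|_{L_1(0,\infty)}.
\]
The first term tends to $0$ because $c_{\la_n}\to c_\la$ while $\|h_{\la_n}\|_{L_1(0,\infty)}$ stays bounded (from Lemma~\ref{2.7}, which supplies uniform direct Riemann integrability, hence a uniform $L_1$ bound on compact subsets of $\R\setminus\{0\}$); the second tends to $0$ by Lemma~\ref{2.8}. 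I do not expect any real obstacle beyond spotting the integral-to-value identity; all remaining ingredients are already recorded in the preceding lemmas.
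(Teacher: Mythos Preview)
Your proof is correct and follows essentially the same approach as the paper: both exploit that $\mu_\la\((t,\infty)\)$ is constant on each $[k\de,(k+1)\de)$ to write $h_\la(k\de)$ as a (weighted) integral of $h_\la$ over that interval, then invoke Lemma~\ref{2.8} together with the continuity of $\la\mapsto\eta_\la$. The only cosmetic difference is that the paper uses the identity $h_\la(k\de)=\de^{-1}\int_{k\de}^{k\de+\de}\E^{\eta_\la(t-k\de)}h_\la(t)\,\D t$ (keeping the exponential weight inside the integral with a $\la$-independent prefactor), whereas you pull the weight out to obtain the $\la$-dependent constant $c_\la=\eta_\la/(1-\E^{-\eta_\la\de})$; the two identities are equivalent and the conclusions are reached in the same way.
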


\begin{proof}
We have
\[
h_\la(k\de) = \frac1\de \int_{k\de}^{k\de+\de} \E^{\eta_\la(t-k\de)}
h_\la(t) \, \D t \, ,
\]
since the function $ t \mapsto \E^{\eta_\la t} h_\la(t) = \mu_\la\(
(t,\infty) \) $ is constant on $ [k\de,k\de+\de) $. We apply
Lemma \ref{2.8}, taking into account continuity of
$ \la \mapsto \eta_\la $.
\end{proof}

\begin{proof}[Proof of Theorem \ref{Th2}]
Existence and uniqueness of $ \eta_\la $ satisfying \eqref{*****} are
ensured by Lemma \ref{2.4}.

We reformulate \eqref{******} as existence of $ T \in (0,\infty) $
such that
\begin{equation}\label{2.10}
\sup_{\la \in [-C,-c] \cup [c,C], \, t \in [T,\infty)} \Big| -\eta_\la
t + \ln \Ex \E^{\la S(t)} \Big| < \infty \, .
\end{equation}

The set $ M = \{ \mu_\la : \la \in [-C,-c] \cup [c,C] \} $ satisfies
the conditions of Theorem \ref{1.2} by Lemma \ref{2.6}.

By \ref{1.4}, $ \int t \, \mu_\la(\D t) $ is bounded away from $ 0 $
and $ \infty $ for $ \la \in [-C,-c] \cup [c,C] $.
The rest of the proof of \eqref{2.10} splits in two cases.

Nonlattice case: $ \Span(\mu)=0 $.

The set $ H = \{ h_\la : \la \in [-C,-c] \cup [c,C] \} $ is uniformly
directly Riemann integrable by Lemma \ref{2.7}. By \eqref{2.2} and
Theorem \ref{t},
\[
\E^{-\eta_\la t} \Ex \E^{\la S(t)} \to \frac{ \int_0^\infty
h_\la(s) \, \D s }{ \int s \, \mu_\la(\D s) } \quad \text{as }
t \to \infty
\]
uniformly in $ \la \in [-C,-c] \cup [c,C] $. In order to
get \eqref{2.10} it remains to check that the right-hand side is
bounded away from $ 0 $ and $ \infty $. For the denominator, see
above. For the numerator, use continuity of the function
$ \la \mapsto \int_0^\infty h(t) \, \D t $ for $ \la\ne0 $
(Lemma \ref{2.8}).

Lattice case: $ \Span(\mu) = \de > 0 $.

The set $ H $ of restrictions to $ \{ 0, \de, 2\de, \dots \} $ of the
functions $ h_\la $ for $ \la \in [-C,-c] \cup [c,C] $ satisfies the
conditions of Theorem \ref{1.45} by Lemma \ref{2.9} (via compactness in
$ l_1 $). By \eqref{2.2} and Theorem \ref{1.45},
\[
\E^{-\eta_\la n\de} \Ex \E^{\la
S(n\de)} \to \frac{ \de \sum_{k=0}^\infty h_\la(k\de) }{ \int
s \, \mu_\la(\D s) } \quad \text{as } n \to \infty 
\]
uniformly in $ \la \in [-C,-c] \cup [c,C] $.
The function $ \la \mapsto \sum_{k=0}^\infty h_\la(k\de) $ is continuous
for $ \la\ne0 $ by Lemma \ref{2.9}, therefore the sum is bounded away
from $ 0 $ and $ \infty $ for $ \la \in [-C,-c] \cup [c,C] $ (it
cannot vanish since $ h_\la(0)=1 $), which leads to \eqref{2.10}.
Thus we get \eqref{2.10} for $ t $ running on the lattice, which is
enough, since $ S(\cdot) $ is constant on $ [k\de,k\de+\de) $ (and
$ \eta_\la $ is bounded).
\end{proof}

\section[Moderate deviations]
  {\raggedright Moderate deviations}
\label{sect3}
Theorems \ref{Th1} and \ref{Th3} are proved in this section.

In order to use small $ \la $ we need \eqref{**}: $ \Ex \exp ( \eps
X^2 - \tau ) < \infty $ for some $ \eps > 0 $.

\begin{remark}\label{3.1}
Assumptions \eqref{*}, \eqref{**} imply $ \Ex X^2 < \infty $.
\end{remark}

\begin{proof}
$ \eps X^2 \le \tau + \E^{\eps X^2 - \tau} $ is integrable.
\end{proof}

\begin{remark}\label{3.2}
Assumption \eqref{**} is invariant under linear transformations of $ X
$, and rescaling of $ \tau $; also, \eqref{**} implies \eqref{****}.
\end{remark}

\begin{proof}
Rescaling $ X $: $ \Ex \exp \( (c^{-2}\eps) (cX)^2 - \tau \)
= \Ex \exp ( \eps X^2 - \tau ) < \infty $.

Shifting $ X $: $ \Ex \exp \( \frac\eps2 (X+c)^2
- \tau \) \le \Ex \exp \( \frac\eps2 (X-c)^2 + \frac\eps2 (X+c)^2
- \tau \) = \E^{c^2 \eps} \Ex \exp ( \eps X^2 - \tau ) < \infty $.

Rescaling $ \tau $: $ \Ex \exp ( c \eps X^2 - c \tau ) = \Ex \( \exp
( \eps X^2 - \tau ) \)^c \le \( \Ex \exp ( \eps X^2 - \tau ) \)^c
< \infty $ for $ c \in (0,1) $, and $ \Ex \exp ( \eps X^2 - c \tau
) \le \Ex \exp ( \eps X^2 - \tau ) < \infty $ for $ c \in [1,\infty)
$.

Finally, \eqref{**} implies \eqref{****} since $ \Ex \exp(\de
X^2-\tau) < \infty $ implies $ \Ex \exp(\eps\de X^2-\eps\tau) < \infty
$ (assuming $ 0<\eps<1$) and therefore $ \Ex \exp(\la
X-\eps\tau) \le \Ex \exp \( \frac{\la^2}{4\eps\de} + \eps\de X^2
- \eps\tau \) < \infty $.
\end{proof}

From now on, till the end of this section, we assume the conditions of
Theorem \ref{Th3}; that is, \eqref{**}, and \eqref{***}: $ \Ex X = 0
$, $ \Ex X^2 = 1 $, $ \Ex \tau = 1 $. Conditions of Theorem \ref{Th2}
follow, since \eqref{**} implies \eqref{****} by Remark \ref{3.2}.

Here is an analytic fact that will give us some integrable majorants.

\begin{lemma}\label{3.3}
For all $ a, \eps, \La \in (0,\infty) $,
\[
\sup_{t>0,x>0,\la\in(0,\La)} \frac{ (1+t+x^2) \exp( \la x - a \la^2 t
) }{ t + \exp( \eps x^2 - t ) } < \infty \, .
\]
\end{lemma}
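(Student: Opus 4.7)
The plan is to partition the domain of $(t, x)$ into regions and bound the ratio on each separately, using two pointwise estimates on $\la x - a\la^2 t$: completing the square gives $\la x - a\la^2 t \le x^2/(4at)$ for every $\la \in \R$ (sharp, attained at $\la = x/(2at)$), while dropping the non-positive quadratic term yields $\la x - a\la^2 t \le \La x$ for $\la \in (0, \La)$.

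Set $T_0 = 1/(2a\eps)$. In the region $t \le T_0$ the factor $1 + t + x^2$ is bounded by $1 + T_0 + x^2$, a polynomial in $x$ alone, and the denominator satisfies $t + \exp(\eps x^2 - t) \ge \exp(\eps x^2 - T_0)$. Applying $\la x - a\la^2 t \le \La x$, the ratio is dominated by $(1 + T_0 + x^2)\exp(\La x - \eps x^2 + T_0)$, whose supremum over $x > 0$ is finite since $\exp(-\eps x^2)$ swallows both the polynomial and the subleading $\exp(\La x)$.

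In the region $t > T_0$ the choice of threshold gives $1/(4at) \le \eps/2$, hence the sharp estimate yields $\exp(\la x - a\la^2 t) \le \exp(\eps x^2/2)$. I split once more. If $\eps x^2 \le 4t$, then $x^2/(4at) \le 1/(a\eps)$, so the exponential in the numerator is bounded by $\exp(1/(a\eps))$; together with $1 + t + x^2 \le 1 + (1 + 4/\eps)t$ and the denominator lower bound $t \ge T_0$, this gives a bounded ratio. If instead $\eps x^2 > 4t$, then $\eps x^2 - t > 3\eps x^2/4$, so the denominator exceeds $\exp(3\eps x^2/4)$ and dominates the numerator's $\exp(\eps x^2/2)$ by a Gaussian margin $\exp(-\eps x^2/4)$; combined with $1 + t + x^2 \le 1 + (1 + \eps/4)x^2$, this margin absorbs the polynomial and bounds the ratio.

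The delicate point is selecting the interior cutoff $\eps x^2 = 4t$ rather than the apparently natural $\eps x^2 = 2t$ at which the naive bounds would only tie: the factor-of-two slack produces the strict Gaussian margin $\exp(-\eps x^2/4)$ needed to kill the polynomial prefactor in the intermediate regime $\eps x^2 \sim t$.
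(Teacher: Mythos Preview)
Your proof is correct. Both arguments rest on the same two exponent bounds---the completed-square estimate $\la x - a\la^2 t \le x^2/(4at)$ and the crude $\la x - a\la^2 t \le \La x$---but they carve up the $(t,x)$ quadrant differently. The paper first rescales to $\eps=1$ and then partitions according to the position of $x$ relative to $\al\sqrt t$ and $\be t$ (three regions: $x\le\al\sqrt t$, $\al\sqrt t\le x\le\be t$, $x\ge\be t$), using the denominator bound $t+\E^{x^2-t}\ge\max(t,1)$ in the first region and $\ge\E^{x^2-t}$ in the other two. You instead separate small time $t\le T_0=1/(2a\eps)$ (handled with the crude bound $\La x$ against $\exp(\eps x^2-T_0)$) from large time, and within large time compare $\eps x^2$ to $4t$. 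Your decomposition avoids the preliminary scaling and is arguably more transparent in how the threshold $T_0$ is chosen to make $x^2/(4at)\le\eps x^2/2$; the paper's version, with its intermediate strip $\al\sqrt t\le x\le\be t$, makes the interaction between the two exponential regimes slightly more explicit. Either way the bookkeeping is routine once the two exponent bounds are in hand.
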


\begin{proof}
Denoting this supremum by $ S(a,\eps,\La) $ we observe that $
S(a,\eps,\La) \le \max(1,c^2) S(c^{-2}a, c^2 \eps, c \La ) $ for
arbitrary $ c > 0 $ (by rescaling, $ x \mapsto c x $ and $ \la \mapsto
c^{-1} \la $). Thus, we restrict ourselves to $ \eps = 1 $.

We note that
\[
\max_{\la\in\R} (\la x-a\la^2 t) = \frac{ x^2 }{ 4at } \, .
\]
We choose $ \al,\be>0 $ such that $ \al > 1 $ and $ \be^2 <
4a(\al^2-1) $ (for instance, $ \al=2 $ and $ \be=3\sqrt a $) and
consider three cases.

Case 1: $ x \le \al \sqrt t $.

We note that $ t + \exp(x^2-t) \ge t + \E^{-t} \ge \max(t,1) $, thus,
\begin{multline*}
\frac{ (1+t+x^2) \exp( \la x - a \la^2 t ) }{ t + \exp( x^2 - t ) } \le 
 \frac{ (1+t+x^2) \exp \frac{ x^2 }{ 4at } }{ \max(t,1) } \le \\
\le \frac{ (1+t+\al^2 t) \exp \frac{ \al^2 }{ 4a } }{ \max(t,1) } \le
 (2+\al^2) \exp \frac{ \al^2 }{ 4a } \, .
\end{multline*}

Case 2: $ \al \sqrt t \le x \le \be t $.
\begin{multline*}
\frac{ (1+t+x^2) \exp( \la x - a \la^2 t ) }{ t + \exp( x^2 - t ) } \le 
 \frac{ (1+t+x^2) \exp \frac{ x^2 }{ 4at } }{ \exp(\al^2 t - t) } \le \\
\le (1+t+\be^2 t^2) \exp \Big( \frac{ \be^2 t^2}{ 4at } - \al^2 t +
 t \Big) \le \\
\le \sup_{t>0} (1+t+\be^2 t^2) \exp \Big( \! - \frac{ 4a(\al^2-1)-\be^2
}{ 4a } t \Big) < \infty \, .
\end{multline*}

Case 3: $ x \ge \be t $.
\begin{multline*}
\frac{ (1+t+x^2) \exp( \la x - a \la^2 t ) }{ t + \exp( x^2 - t )
 } \le \\
\le ( 1 + \be^{-1} x + x^2 ) \exp ( \la x - a \la^2 t - x^2 + t ) \le \\
\le \sup_x ( 1 + \be^{-1} x + x^2 ) \exp ( \La x - x^2 + \be^{-1} x )
< \infty \, .
\end{multline*}
\end{proof}

\begin{lemma}\label{3.4}
For all $ a, \eps, \La \in (0,\infty) $,
\[
\sup_{t>0,x\in\R,\la\in(-\La,\La)} \frac{ (1+t+x^2) \( 1 + \exp( \la x -
a \la^2 t ) \) }{ t + \exp( \eps x^2 - t ) } < \infty \, .
\]
\end{lemma}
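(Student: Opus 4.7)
The strategy is to reduce Lemma \ref{3.4} to Lemma \ref{3.3} by symmetry and sign arguments. The expression being supremized is invariant under the involution $(x,\la) \mapsto (-x,-\la)$, since $\la x$, $\la^2$, and $x^2$ are all preserved. Hence we may assume $\la \in [0,\La)$ throughout, with $x$ still ranging over $\R$.

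Next I would split the numerator into its two summands. For the exponential summand, observe that when $x \ge 0$ and $\la \ge 0$, Lemma \ref{3.3} directly gives the bound for $\la \in (0,\La)$; the case $\la = 0$ follows by letting $\la \to 0+$, since the quotient is continuous in $\la$ and the supremum bound $S(a,\eps,\La)$ in Lemma \ref{3.3} is uniform. When $x < 0$ and $\la \ge 0$, we have $\la x \le 0$, so $\exp(\la x - a\la^2 t) \le 1$, and the exponential summand contributes no more than the constant summand.

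For the constant summand $1$ in the factor $\bigl(1 + \exp(\la x - a\la^2 t)\bigr)$, I need to bound
\[
\sup_{t>0,\,x\in\R} \frac{1+t+x^2}{t+\exp(\eps x^2 - t)} \, .
\]
This can be read off Lemma \ref{3.3} as well: taking $\la \to 0+$ in Lemma \ref{3.3} (with $x > 0$, which suffices since the expression depends only on $x^2$) shows this supremum is dominated by $S(a,\eps,\La)$. Alternatively, one checks it by hand — in the range $x^2 \le t$ the denominator is at least $\max(t,\E^{-1})$ and the numerator is at most $1 + 2t$; in the range $x^2 > t$ the denominator contains $\exp(\eps x^2 - t)$, which dominates $1+t+x^2$ for all sufficiently large $x^2$ uniformly in $t$. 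Combining the two summand bounds yields Lemma \ref{3.4}.

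I do not expect any real obstacle here: once the symmetry $(x,\la) \mapsto (-x,-\la)$ is noted and the sign of $\la x$ is exploited for $x < 0$, the content of Lemma \ref{3.4} is essentially Lemma \ref{3.3} plus a trivial boundedness statement for the $\la=0$ specialization.
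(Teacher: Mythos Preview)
Your proof is correct and follows essentially the same approach as the paper: reduce the exponential summand to Lemma~\ref{3.3} via sign/symmetry considerations, and bound the constant summand separately. The paper streamlines slightly by applying Lemma~\ref{3.3} directly to $|x|,|\la|$ (so that $\la x\le|\la x|$ handles all signs at once) and by bounding the constant summand explicitly via $\tfrac{1+t}{t+\E^{-t}}\le 2$ and $\tfrac{x^2}{t+\exp(\eps x^2-t)}\le\tfrac{x^2}{\eps x^2}=\tfrac1\eps$; your alternative hand-check sketch (the case $x^2>t$) is not quite right as stated when $\eps<1$, but your primary route through $\la\to0+$ in Lemma~\ref{3.3} is valid.
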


\begin{proof}
By Lemma \ref{3.3} applied to $ |x|, |\la| $,
\[
\sup_{t>0,x\in\R,\la\in(-\La,\La)} \frac{ (1+t+x^2) \exp( |\la x| -
a \la^2 t ) }{ t + \exp( \eps x^2 - t ) } < \infty \, ,
\]
and $ \la x \le |\la x| $, of course. The new terms are bounded:
\begin{gather*}
\frac{ 1+t }{ t + \exp( \eps x^2 - t ) } \le \frac{ 1+t }{ t + \E^{-t}
 } \le \frac{ 1+t }{ \max(1,t) } \le 2 \, ; \\
\frac{ x^2 }{ t + \exp( \eps x^2 - t ) } \le \frac{ x^2 }{ t + \eps
x^2 - t } \le \frac1\eps \, .
\end{gather*}
\end{proof}

Here is a counterpart of Lemma \ref{2.3}. This time, the origin
$ \la=\eta=0 $ is included (but its neighborhood is reduced).

\begin{lemma}\label{3.45}
For every $ a \in (0,\infty) $, maps $ (\la,\eta) \mapsto \exp ( \la X
- \eta \tau ) $ and $ (\la,\eta) \mapsto \tau \exp ( \la X - \eta \tau
) $ are continuous from $ \{ (\la,\eta) : \la \in \R, \eta \in
[a\la^2,\infty) \} $ to the space $ L_1 $ of integrable random
variables.
\end{lemma}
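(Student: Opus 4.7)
The plan is to prove continuity by dominated convergence, using Lemma \ref{3.4} to supply a majorant that is integrable thanks to assumption \eqref{**}.

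Fix a point $(\la_0,\eta_0)$ with $\eta_0 \ge a\la_0^2$ and take any sequence $(\la_n,\eta_n) \to (\la_0,\eta_0)$ with $\eta_n \ge a\la_n^2$. Choose $\La$ so that $|\la_n| < \La$ for all $n$ (possible since $\la_n \to \la_0$). Pointwise a.s.\ we obviously have $\E^{\la_n X - \eta_n \tau} \to \E^{\la_0 X - \eta_0 \tau}$, and likewise with the factor $\tau$; so it remains to find an integrable random variable dominating both families.

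The key observation is that the constraint $\eta_n \ge a\la_n^2$ gives $\E^{\la_n X - \eta_n \tau} \le \E^{\la_n X - a\la_n^2 \tau}$, and hence
\[
\tau \, \E^{\la_n X - \eta_n \tau} + \E^{\la_n X - \eta_n \tau} \le (1+\tau+X^2)\bigl( 1 + \E^{\la_n X - a\la_n^2 \tau} \bigr) \, .
\]
By Lemma \ref{3.4}, applied with this $a$, the given $\eps$ from \eqref{**}, and the $\La$ above, there is a constant $C<\infty$ such that
\[
(1+\tau+X^2)\bigl( 1 + \E^{\la_n X - a\la_n^2 \tau} \bigr) \le C\bigl( \tau + \E^{\eps X^2 - \tau} \bigr) \quad \text{a.s.}
\]
for all $n$. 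The right-hand side is integrable: $\Ex \tau < \infty$ by \eqref{*}, and $\Ex \E^{\eps X^2 - \tau} < \infty$ by \eqref{**}. Dominated convergence then yields $L_1$-convergence of both $\E^{\la_n X - \eta_n \tau}$ and $\tau \E^{\la_n X - \eta_n \tau}$ to their limits, which is the desired continuity.

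There is no real obstacle beyond assembling the pieces: the only content is the passage from pointwise to $L_1$ convergence, and Lemma \ref{3.4} was tailored precisely to provide an $(a,\eps,\La)$-uniform majorant of the right form. The main point to notice is that the constraint $\eta \ge a\la^2$ (which is exactly the regime relevant for moderate deviations, where $\eta_\la \sim \la^2/2$) is used to absorb $-\eta \tau$ into $-a\la^2 \tau$ before applying the lemma; this is what makes the origin $(0,0)$, excluded in Lemma \ref{2.3}, now accessible.
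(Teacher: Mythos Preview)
Your proof is correct and follows the same approach as the paper: dominated convergence with the majorant $C\bigl(\tau + \E^{\eps X^2 - \tau}\bigr)$ furnished by Lemma \ref{3.4}, integrable by \eqref{*} and \eqref{**}. Your write-up simply spells out the details that the paper leaves implicit, in particular the use of the constraint $\eta \ge a\la^2$ to reduce to the form appearing in Lemma \ref{3.4}.
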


\begin{proof}
We apply the dominated convergence theorem, taking into account that
$ \tau + \exp ( \eps X^2 - \tau ) $ is an integrable majorant by
Lemma \ref{3.4}.
\end{proof}

\begin{lemma}\label{3.5}
For all $ a, \eps, \La \in (0,\infty) $,
\[
\sup_{t>0,x\in\R,\la\in(-\La,0)\cup(0,\La)} \frac{ | \exp( \la x -
a \la^2 t ) - 1 - ( \la x - a \la^2 t ) | }{ \la^2 \( t + \exp( \eps
x^2 - t ) \) } < \infty \, .
\]
\end{lemma}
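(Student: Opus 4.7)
My plan is to reduce the problem to a uniform bound on the second derivative of $F(\la) := \exp(\la x - a\la^2 t) - 1 - (\la x - a\la^2 t)$. Since $F(0) = 0$, and since $F'(\la) = (x - 2a\la t)(e^{\la x - a\la^2 t} - 1)$ gives $F'(0) = 0$, Taylor's theorem with integral remainder yields $F(\la) = \int_0^\la (\la - s) F''(s) \, \D s$, so for $\la \ne 0$
\[
\frac{|F(\la)|}{\la^2} \le \frac{1}{2} \sup_{|s| \le |\la|} |F''(s)|.
\]
The task reduces to showing that $|F''(s)|/(t + \exp(\eps x^2 - t))$ is uniformly bounded in $t > 0$, $x \in \R$, and $s \in (-\La, \La)$.

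A direct computation gives $F''(s) = (x - 2ast)^2 e^{sx - as^2 t} - 2at(e^{sx - as^2 t} - 1)$. I would dispose of two of the three resulting terms immediately: $2at/(t + e^{\eps x^2 - t}) \le 2a$, and by Lemma~\ref{3.4}
\[
\frac{t \cdot e^{sx - as^2 t}}{t + e^{\eps x^2 - t}} \le \frac{(1 + t + x^2) e^{sx - as^2 t}}{t + e^{\eps x^2 - t}} = O(1).
\]

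The only delicate term is $(x - 2ast)^2 e^{sx - as^2 t}$. The naive estimate $(x - 2ast)^2 \le 2x^2 + 8a^2 s^2 t^2$ produces a factor $s^2 t^2$ that is quadratic in $t$ and, since $s$ can be arbitrarily small, cannot be absorbed into $1 + t + x^2$. The key trick will be to peel half of the Gaussian decay off the exponential: write
\[
e^{sx - as^2 t} = e^{-(a/2) s^2 t} \cdot e^{sx - (a/2) s^2 t},
\]
and use the elementary bound $u e^{-u} \le 1/e$ (with $u = (a/2) s^2 t$) to obtain $s^2 t^2 e^{-(a/2) s^2 t} \le 2t/(ae)$. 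This gives $(x - 2ast)^2 e^{-(a/2) s^2 t} \le 2x^2 + 16 at/e = O(1 + t + x^2)$, after which Lemma~\ref{3.4} applied with $a$ replaced by $a/2$ bounds the remaining factor $(1 + t + x^2) e^{sx - (a/2)s^2 t}/(t + e^{\eps x^2 - t})$.

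The main obstacle is precisely this quadratic-in-$t$ contribution. The cruder bound $|e^y - 1 - y| \le y^2 e^{|y|}/2$ applied directly to $y = \la x - a\la^2 t$ is insufficient, since at $x = 0$ it leaves an unbounded $\la^2 t^2$; the integral form of Taylor combined with the $u e^{-u}$ device is what allows that $t^2$ to be tamed by the Gaussian factor already present in the exponent.
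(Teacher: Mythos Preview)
Your argument is correct and takes a genuinely different route from the paper. The paper does not expand $F$ in $\la$; instead it writes $u=\la x-a\la^2 t$ and splits into two regions. When $|x|\le a|\la|t$ one has $-2a\la^2 t\le u\le 0$, whence $|\E^u-1-u|=\E^u-1-u\le -u\le 2a\la^2 t$, which already gives the bound. When $|x|\ge a|\la|t$ the paper uses the very estimate you discarded, $|\E^u-1-u|\le\tfrac12 u^2\max(1,\E^u)$, together with $u^2/\la^2\le 2x^2+2(a\la t)^2\le 4x^2$ (the case hypothesis kills the $t^2$), and then invokes Lemma~\ref{3.4}. So the paper's device for avoiding the bad $\la^2 t^2$ term is a case split rather than your ``peel off half the Gaussian'' trick; it is a little shorter, since no second application of Lemma~\ref{3.4} with a modified constant is needed. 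Your approach, on the other hand, is more systematic---it would generalize to higher-order remainders---and the $u\E^{-u}\le 1/\E$ step is a clean way to convert the spare Gaussian decay into a gain of one power of $t$.
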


\begin{proof}
Denote $ u = \la x - a \la^2 t $.

Case $ |x| \le a |\la| t $:
we have $ |\la x| \le a \la^2 t $, thus $ -2a \la^2 t \le u \le 0 $
and $ | \E^u - 1 - u | = \E^u - 1 - u \le 1 - 1 - u = -u \le 2 a \la^2
t \le 2 a \la^2 \( t + \exp( \eps x^2 - t ) \) $.

Case $ |x| \ge a |\la| t $:
we apply the bound $ | \E^u - 1 - u | \le \frac12 u^2 \max(1,\E^u) $,
note that $ u^2/\la^2 \le 2x^2 + 2(a\la t)^2 \le 4x^2 $ and get an
upper bound
\[
\frac{ x^2 \max \( 1, \exp( \la x - a \la^2 t ) \) }{ t + \exp( \eps
x^2 - t ) } \, ,
\]
bounded by Lemma \ref{3.4}.
\end{proof}

\begin{lemma}\label{3.6}
For all $ a \in (0,\infty) $,
\[
\frac{ \Ex \exp( \la X - a \la^2 \tau ) - 1 }{ \la^2 } \to \frac12 -
a \quad \text{as } \la \to 0 \, .
\]
\end{lemma}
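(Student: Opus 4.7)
My plan is to carry out a second-order Taylor expansion of the exponential around $\la=0$ and justify the interchange of limit and expectation by Lemma \ref{3.5}. Pointwise in $X,\tau$, the standard expansion $\E^u = 1 + u + \tfrac12 u^2 + o(u^2)$ applied to $u = \la X - a\la^2 \tau$ gives
\[
\frac{ \exp( \la X - a\la^2 \tau ) - 1 - ( \la X - a\la^2 \tau ) }{ \la^2 } \longrightarrow \frac{X^2}{2} \quad \text{as } \la \to 0,
\]
since $(\la X - a\la^2\tau)^2/\la^2 = (X - a\la\tau)^2 \to X^2$ and the cubic-and-higher tail vanishes. The linear contribution is computed exactly: by \eqref{***},
\[
\frac{ \Ex( \la X - a\la^2 \tau ) }{ \la^2 } = \frac{ \la \Ex X - a\la^2 \Ex \tau }{ \la^2 } = -a.
\]
Hence, once the interchange is justified, the left-hand side of the lemma tends to $\tfrac12 \Ex X^2 - a = \tfrac12 - a$, as claimed.

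The only nontrivial step is dominated convergence. Fix any $\La>0$ and take $\eps>0$ as in \eqref{**}. Lemma \ref{3.5} then supplies a constant $C$ such that
\[
\biggl| \frac{ \exp( \la X - a\la^2 \tau ) - 1 - ( \la X - a\la^2 \tau ) }{ \la^2 } \biggr| \le C \bigl( \tau + \exp( \eps X^2 - \tau ) \bigr)
\]
for every $\la \in (-\La,0)\cup(0,\La)$ and every sample point. The majorant is integrable by \eqref{*} and \eqref{**}, so dominated convergence finishes the job. There is no serious obstacle to overcome: Lemma \ref{3.5} was set up precisely to supply this integrable majorant, and the rest is bookkeeping on the first two Taylor coefficients using \eqref{***}.
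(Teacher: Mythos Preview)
Your proof is correct and follows essentially the same approach as the paper: subtract the linear term, use Lemma~\ref{3.5} to dominate the Taylor remainder by a constant times $\tau + \exp(\eps X^2 - \tau)$, apply dominated convergence to obtain $\tfrac12\Ex X^2$, and then use \eqref{***} to evaluate the linear contribution as $-a$. The only cosmetic difference is that you compute the linear part before invoking dominated convergence rather than after.
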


\begin{proof}
We have
\[
\frac{ \exp( \la X - a \la^2 \tau ) - 1 - ( \la X - a \la^2 \tau )
}{ \la^2 } \to \frac12 X^2 \quad \text{a.s.} \quad \text{as } \la \to
0 \, .
\]
The left-hand side is dominated by $ \tau + \exp(\eps X^2 - \tau) $ by
Lemma \ref{3.5}, the majorant being integrable (for some $ \eps $)
by \eqref{*}, \eqref{**}. By the dominated convergence theorem,
\[
\frac{ \Ex \exp( \la X - a \la^2 \tau ) - 1 - \la \Ex X +
a \la^2 \Ex \tau }{ \la^2 } \to \frac12 \Ex X^2 \, ;
\]
it remains to use \eqref{***}.
\end{proof}

Recall $ \eta_\la $ satisfying \eqref{*****} $ \Ex \exp ( \la X
- \eta_\la \tau ) = 1 $, given by Lemma \ref{2.4}; $ \eta_0 = 0 $, and
$ \eta_\la > 0 $ for $ \la \ne 0 $.

\begin{lemma}\label{3.7}
$ \eta_\la = \frac12 \la^2 + o(\la^2) $ as $ \la \to 0 $.
\end{lemma}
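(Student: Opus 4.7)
The plan is to sandwich $\eta_\la$ between $(\tfrac12-\eps)\la^2$ and $(\tfrac12+\eps)\la^2$ for small $\la\ne0$, using Lemma \ref{3.6} applied at two nearby values of the parameter $a$, combined with the strict monotonicity of $\eta \mapsto \psi(\la,\eta) = \Ex\exp(\la X - \eta\tau)$ established in the proof of Lemma \ref{2.4}.

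More concretely, fix $\eps \in (0,\tfrac12)$. Lemma \ref{3.6} with $a = \tfrac12 - \eps$ says that
\[
\frac{\Ex \exp(\la X - (\tfrac12-\eps)\la^2 \tau) - 1}{\la^2} \to \eps \quad \text{as } \la \to 0,
\]
so for all sufficiently small $|\la| > 0$ the quantity $\Ex \exp(\la X - (\tfrac12-\eps)\la^2 \tau)$ is strictly greater than $1$. Since $\psi(\la,\cdot)$ is strictly decreasing with $\psi(\la,\eta_\la) = 1$, this forces $\eta_\la > (\tfrac12-\eps)\la^2$. Analogously, Lemma \ref{3.6} with $a = \tfrac12 + \eps$ gives a limit of $-\eps$, so $\Ex \exp(\la X - (\tfrac12+\eps)\la^2 \tau) < 1$ for all sufficiently small $|\la| > 0$, which forces $\eta_\la < (\tfrac12+\eps)\la^2$.

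Combining the two bounds, there exists $\la_\eps > 0$ such that $(\tfrac12-\eps)\la^2 < \eta_\la < (\tfrac12+\eps)\la^2$ for all $\la$ with $0 < |\la| < \la_\eps$. Since $\eps$ was arbitrary, $\eta_\la/\la^2 \to \tfrac12$ as $\la \to 0$, which is exactly the claim. There is no real obstacle here: the work has already been done in Lemma \ref{3.6}; the only thing to verify is that Lemma \ref{3.6} applies in the regime $\eta = a\la^2$ (which is precisely the regime covered by the integrable majorant from Lemma \ref{3.45}), and that the uniqueness/monotonicity from Lemma \ref{2.4} lets us convert an inequality on $\psi$ into the desired inequality on $\eta_\la$.
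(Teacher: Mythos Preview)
Your proof is correct and follows essentially the same approach as the paper: apply Lemma \ref{3.6} with $a$ just above and just below $\tfrac12$, then use the strict monotonicity of $\psi(\la,\cdot)$ from Lemma \ref{2.4} to convert the resulting inequalities on $\psi$ into the sandwich $(\tfrac12-\eps)\la^2 < \eta_\la < (\tfrac12+\eps)\la^2$. The paper's version is simply more terse.
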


\begin{proof}
If $ a > \frac12 $ then by Lemma \ref{3.6}, $ \Ex \exp ( \la X -
a \la^2 \tau ) < 1 $ and therefore $ \eta_\la < a \la^2 $ for all
$ \la \ne 0 $ small enough. Similarly, if $ a < \frac12 $ then
$ \eta_\la > a \la^2 $ for all $ \la \ne 0 $ small enough.
\end{proof}

\begin{lemma}\label{3.8}
The function $ \la \mapsto \mu_\la $ is continuous from $ \R $ to the
space of measures with the norm topology.
\end{lemma}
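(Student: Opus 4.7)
The plan is to extend Lemma~\ref{2.5} (which handles $\la \ne 0$) by establishing continuity at $\la = 0$. Note first that $\eta_0 = 0$, so the Radon--Nikodym derivative $\phi_0 = \cE{\E^{0\cdot X - 0\cdot\tau}}{\tau} = 1$, giving $\mu_0 = \mu$. Thus the claim reduces to showing $\|\mu_\la - \mu\| \to 0$ as $\la \to 0$.

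Following the calculation in the proof of Lemma~\ref{2.5} verbatim (with $\mu$ in the role of $\mu_{\la_\infty}$ and using $\phi_0 = 1$, $\eta_0 = 0$), one obtains the bound
\[
\|\mu_\la - \mu\| \le \Ex \bigl| \E^{\la X - \eta_\la \tau} - 1 \bigr| \, .
\]
So it suffices to prove that $\E^{\la X - \eta_\la \tau} \to 1$ in $L_1$ as $\la \to 0$.

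Here the new ingredient is Lemma~\ref{3.45}, which gives continuity of $(\la,\eta) \mapsto \E^{\la X - \eta\tau}$ as an $L_1$-valued map on the region $\{(\la,\eta) : \la \in \R,\; \eta \ge a\la^2\}$ for any fixed $a > 0$. To apply it along the curve $(\la, \eta_\la)$ as $\la \to 0$, I need $\eta_\la \ge a\la^2$ for all sufficiently small $\la \ne 0$. This is exactly what Lemma~\ref{3.7} delivers: since $\eta_\la/\la^2 \to \tfrac12$, taking $a = \tfrac13$ (say) yields $\eta_\la \ge a\la^2$ for all small $\la \ne 0$. Therefore $(\la, \eta_\la) \to (0,0)$ within the domain of Lemma~\ref{3.45}, so $\E^{\la X - \eta_\la\tau} \to \E^{0} = 1$ in $L_1$, which gives the desired convergence.

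The only subtle point is that at $\la = 0$ the previous Lemma~\ref{2.3} no longer applies (it requires $\eta$ bounded away from $0$), which is precisely why Lemma~\ref{3.45} was set up with the parabolic constraint $\eta \ge a\la^2$ and why Lemma~\ref{3.7} was proved first. Combined with Lemma~\ref{2.5} for $\la \ne 0$, this gives continuity of $\la \mapsto \mu_\la$ on all of $\R$.
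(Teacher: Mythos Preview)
Your proof is correct and follows exactly the paper's approach: invoke Lemma~\ref{2.5} for $\la\ne 0$, and at $\la=0$ rerun the same $L_1$ estimate using Lemma~\ref{3.45} in place of Lemma~\ref{2.3}, with Lemma~\ref{3.7} ensuring that the curve $(\la,\eta_\la)$ stays in the parabolic region $\eta\ge a\la^2$. The paper compresses this into one sentence, but the content is identical.
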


\begin{proof}
Continuity on $ (-\infty,0) \cup (0,\infty) $ holds by
Lemma \ref{2.5}. The same proof gives now continuity at $ 0 $ due to
Lemma \ref{3.45} (and \ref{3.7}).
\end{proof}

\begin{lemma}\label{3.10}
The set $ \{ \mu_\la : \la \in [-C,C] \} $ satisfies the conditions of
Theorem \ref{1.2} whenever $ 0 < C < \infty $.
\end{lemma}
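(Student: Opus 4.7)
The plan is to mirror the proof of Lemma \ref{2.6}, but with Lemma \ref{2.5} replaced by Lemma \ref{3.8} and Lemma \ref{2.3} replaced by Lemma \ref{3.45}, so that the endpoint $\la=0$ is covered. First, Lemma \ref{3.8} gives continuity of $\la\mapsto\mu_\la$ on all of $\R$ in the norm topology; hence $\{\mu_\la:\la\in[-C,C]\}$ is a continuous image of a compact interval, and so is norm compact (a fortiori weakly compact). The constant span property is inherited from $\mu$ exactly as in Lemma \ref{2.6}, since at $\la=0$ we simply have $\mu_0=\mu$, and for $\la\ne 0$ the densities $\phi_\la$ are strictly positive, making $\mu_\la$ and $\mu$ mutually absolutely continuous.

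The only real work is uniform integrability. I would reuse the identity
\[
\int_{[a,\infty)} t\,\mu_\la(\D t)=\Ex\(\tau\exp(\la X-\eta_\la\tau)\One_{[a,\infty)}(\tau)\)
\]
from the proof of Lemma \ref{2.6}. Its uniform vanishing as $a\to\infty$ reduces to showing that the family $\{\tau\exp(\la X-\eta_\la\tau):\la\in[-C,C]\}$ is a relatively compact subset of $L_1$. By Lemma \ref{3.45} this holds as soon as I produce some $a>0$ with $\eta_\la\ge a\la^2$ for every $\la\in[-C,C]$; for then the curve $\la\mapsto(\la,\eta_\la)$ stays in $\{\eta\ge a\la^2\}$, Lemma \ref{3.45} makes $\la\mapsto\tau\exp(\la X-\eta_\la\tau)$ continuous from $[-C,C]$ into $L_1$, and the image of this compact set is what I need.

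The main obstacle, and the reason Lemma \ref{2.6} does not suffice, is the degeneracy at $\la=0$, where both $\eta_\la$ and $\la^2$ vanish, so Lemma \ref{2.3} is unavailable. This is precisely where Lemma \ref{3.7} pays off: since $\eta_\la/\la^2\to 1/2$ as $\la\to 0$, I can fix $\de\in(0,C)$ with $\eta_\la\ge\la^2/4$ on $(-\de,\de)\setminus\{0\}$. On the compact complement $[-C,-\de]\cup[\de,C]$, Lemma \ref{2.4} says that $\la\mapsto\eta_\la$ is continuous and strictly positive, so $a':=C^{-2}\inf\eta_\la>0$ gives $\eta_\la\ge a'\la^2$ there. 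Setting $a=\min(1/4,a')$ then yields $\eta_\la\ge a\la^2$ throughout $[-C,C]$, closing the argument.
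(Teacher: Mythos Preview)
Your proposal is correct and follows essentially the same approach as the paper, which simply says to repeat the proof of Lemma \ref{2.6} with Lemma \ref{3.8} replacing \ref{2.5} and Lemma \ref{3.45} replacing \ref{2.3}. The extra care you take---using Lemma \ref{3.7} near $\la=0$ and continuity of $\eta_\la$ away from $0$ to produce a single $a>0$ with $\eta_\la\ge a\la^2$ on $[-C,C]$---is exactly the verification needed to invoke Lemma \ref{3.45} along the curve $\la\mapsto(\la,\eta_\la)$, a point the paper leaves implicit.
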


\begin{proof}
We repeat the proof of Lemma \ref{2.6} using Lemma \ref{3.8} instead
of \ref{2.5}, and \ref{3.45} instead of \ref{2.3}.
\end{proof}

Recall the functions $ h_\la(t) = \E^{-\eta_\la
t} \mu_\la\((t,\infty)\) $. Similarly to Lemma \ref{2.7} we get
uniform direct Riemann integrability of the set $ \{ h_\la : \la \in
[-C,C] \} $. Similarly to Lemma \ref{2.8}, the map $ \la \mapsto h_\la
$ is continuous from $ \R $ to $ L_1(0,\infty) $. Similarly to
Lemma \ref{2.9}, in the nonlattice case the map $ \la \mapsto \(
h_\la(k\de) \)_{k=0}^\infty $ is continuous from $ \R $ to $ l_1 $.

\begin{proof}[Proof of Theorem \ref{Th3}]
The first claim is given by Lemma \ref{3.7}. For the second claim, the
proof of Theorem \ref{Th2} needs only trivial modifications: $ [-C,C]
$ and related results of Sect.~\ref{sect3} are used instead of $
[-C,-c] \cup [c,C] $ and related results of Sect.~\ref{sect2}.
\end{proof}

\begin{proof}[Proof of Theorem \ref{Th1}]
By Theorem \ref{Th3},
\[
\frac1{\la^2 t} \ln \Ex \exp \la S(t) = \frac{ \eta_\la }{ \la^2 } +
O \Big( \frac1{ \la^2 t } \Big) = \frac12 + o(1) +
O \Big( \frac1{ \la^2 t } \Big)
\]
as $ t \to \infty $, uniformly in $ \la \in [-C,0)\cup(0,C] $. Thus,
\[
\lim_{t\to\infty, \la\to0, \la^2
t\to\infty} \frac1{\la^2 t} \ln \Ex \exp \la S(t) = \frac12 \, ;
\]
Theorem \ref{Th1} follows by the well-known G\"artner(-Ellis)
argument.
\end{proof}

\bigskip
\filbreak
{
\small
\begin{sc}
\parindent=0pt\baselineskip=12pt
\parbox{4in}{
Boris Tsirelson\\
School of Mathematics\\
Tel Aviv University\\
Tel Aviv 69978, Israel
\smallskip
\par\quad\href{mailto:tsirel@post.tau.ac.il}{\tt
 mailto:tsirel@post.tau.ac.il}
\par\quad\href{http://www.tau.ac.il/~tsirel/}{\tt
 http://www.tau.ac.il/\textasciitilde tsirel/}
}

\end{sc}
}
\filbreak

\end{document}